\newtheorem{definition}{Definition}
\newtheorem{proposition}{Proposition}
\newtheorem{theorem}{Theorem}
\begin{document}

\title[Groupoids and wreath products of musical transformations]{Groupoids and wreath products of musical transformations: a categorical approach from poly-Klumpenhouwer networks.}
\author{Alexandre Popoff}
\address{119 Rue de Montreuil, 75011 Paris}
\email{al.popoff@free.fr}

\author{Moreno Andreatta}
\address{IRCAM/CNRS/UPMC and IRMA-GREAM, Universit\'e de Strasbourg}
\email{Moreno.Andreatta@ircam.fr, andreatta@math.unistra.fr}

\author{Andr\'ee Ehresmann}
\address{Universit\'e de Picardie, LAMFA}
\email{andree.ehresmann@u-picardie.fr}

\subjclass[2010]{00A65}
\keywords{Transformational music theory, Klumpenhouwer network, category theory, groupoid, wreath product}

\begin{abstract}
Transformational music theory, pioneered by the work of Lewin, shifts the music-theoretical and analytical focus from the ``object-oriented'' musical content to an operational musical process, in which transformations between musical elements are emphasized. In the original framework of Lewin, the set of transformations often form a group, with a corresponding group action on a given set of musical objects. Klumpenhouwer networks have been introduced based on this framework: they are informally labelled graphs, the labels of the vertices being pitch classes, and the labels of the arrows being transformations that maps the corresponding pitch classes. Klumpenhouwer networks have been recently formalized and generalized in a categorical setting, called poly-Klumpenhouwer networks. This work proposes a new groupoid-based approach to transformational music theory, in which transformations of PK-nets are considered rather than ordinary sets of musical objects. We show how groupoids of musical transformations can be constructed, and an application of their use in post-tonal music analysis with Berg's \textit{Four pieces for clarinet and piano, Op. 5/2}. In a second part, we show how groupoids are linked to wreath products (which feature prominently in transformational music analysis) through the notion groupoid bisections.
\end{abstract}

\maketitle

\section{Groupoids of musical transformations}

The recent field of transformational music theory, pioneered by the work of Lewin \cite{Lewin_1982, Lewin_1987}, shifts the music-theoretical and analytical focus from the ``object-oriented'' musical content to an operational musical process. As such, transformations between musical elements are emphasized, rather than the musical elements themselves. In the original framework of Lewin, the set of transformations often form a group, with a corresponding group action on a given set of musical objects. Within this framework, Klumpenhouwer networks (henceforth K-nets) \cite{Lewin_1990,Klumpenhouwer_1991,Klumpenhouwer_1998} have stressed the deep synergy between set-theoretical and transformational approaches thanks to their anchoring in both group and graph theory, as observed by many scholars \cite{Nolan_2007}. We recall that a K-net is informally defined as a labelled graph, wherein the labels of the vertices belong to the set of pitch classes, and each arrow is labelled with a transformation that maps the pitch class at the source vertex to the pitch class at the target vertex. Klumpenhouwer networks allow one to conveniently visualize at once the musical elements of a set and the specific transformations between them. This notion has been later formalized in a more categorical setting, first as limits of diagrams within the framework of denotators \cite{Mazzola_2006}, and later as a special case of a categorical construction called poly-Klumpenhouwer networks (PK-nets) \cite{PopoffMCM2015,Popoff2016}.

The goal of this paper is to propose a new groupoid-based approach to transformational music theory, in which transformations of PK-nets are considered rather than ordinary sets of musical objects. The first section shows how groupoids of musical transformations can be constructed, and how they can be applied to post-tonal music analysis. The second section shows how groupoids are linked to wreath products through the notion groupoid bisections. Wreath products feature prominently in transformational music theory \cite{Hook_2002}. The work presented in this paper thus shows how groupoids of musical transformations can be related to the more traditional group-based approach of transformational music theory.

We assume that the reader is familiar with the basic notions of transformational music analysis, in particular with the so-called $T\text{/}I$ group and its action on the set of the twelve pitch classes (see \cite{Fiore_2011,Fiore_2013} for additional information).

\subsection{Introduction to PK-Nets}

The groupoid-based approach to transformational music theory presented in this paper stems from the constitutive elements of poly-Klumpenhouwer networks which have been introduced previously \cite{PopoffMCM2015,Popoff2016}. We recall the categorical definition of a PK-net, which generalizes the original notion of K-nets in various ways.

\begin{definition}
Let $\mathbf{C}$ be a category, and $S$ a functor from $\mathbf{C}$ to the category $\mathbf{Sets}$ of (small) sets. Let $\Delta$ be a small category and $R$ a functor from $\Delta$ to $\mathbf{Sets}$ with non-empy values. A {\it PK-net of form $R$ and of support $S$} is a 4-tuple $(R,S,F,\phi)$, in which $F$ is a functor from $\Delta$ to $\mathbf{C}$, and $\phi$ is a natural transformation from $R$ to $SF$.
\end{definition}

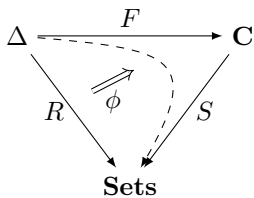
\begin{figure}
	\centering
	\begin{tikzpicture}
		\node (A) at (0,0) {$\Delta$};
		\node (B) at (3,0) {$\mathbf{C}$};
		\node (C) at (1.5,-2) {$\mathbf{Sets}$};
		\draw[->,>=latex] (A) -- (B) node[above,midway]{$F$};
		\draw[->,>=latex] (B) -- (C) node[right,midway] (D) {$S$} ;
		\draw[->,>=latex] (A) -- (C) node[left,midway]  (E) {$R$};
		\draw[->,>=latex,dashed] (A) to[out=-5,in=60,looseness=2.0] node[above,midway](F){} (C) ;
		\draw[-Implies,double distance=1.5pt,shorten >=8pt,shorten <=8pt] (E) to node[below,midway] {$\phi$} (F);
	\end{tikzpicture}
\caption{Diagrammatic representation of a PK-net $(R,S,F,\phi)$.}
\label{fig:PKNet_diagram}
\end{figure}

A PK-net can be represented by the diagram of Figure \ref{fig:PKNet_diagram}. Among the constitutive elements of a PK-net, the category $\mathbf{C}$ and the functor $S \colon \mathbf{C} \to \mathbf{Sets}$ represent the context of the analysis. Traditional transformational music theory commonly relies on a group acting on a given set of objects: the most well-known examples are the $T$/$I$ group acting on the set of the twelve pitch classes, the same $T$/$I$ group acting simply transitively on the set of the 24 major and minor triads, or the $PLR$ group acting simply transitively on the same set, to name a few examples. From a categorical point of view, the data of a group and its action on a set is equivalent to the data of a functor from a single-object category with invertible morphisms to the category of sets. However, this situation can be further generalized by considering any category $\mathbf{C}$ along with a functor $S \colon \mathbf{C} \to \mathbf{Sets}$. The morphisms of the category $\mathbf{C}$ are therefore the musical transformations of interest.  The category $\Delta$ serves as the abstract skeleton of the PK-net: as such, its objects and morphisms are abstract entities, which are labelled by the functor $F$ from $\Delta$ to the category $\mathbf{C}$. The objects of $\Delta$ do not represent the actual musical elements of a PK-net: these are introduced by the functor $R$ from $\Delta$ to $\mathbf{Sets}$. This functor sends each object of $\Delta$ to an actual set, which may contain more than a single element, and whose elements abstractly represent the musical objects of study. However, these elements are not yet labelled. In the same way the morphisms of $\Delta$ represent abstract relationships which are given a concrete meaning by the functor $F$, the elements in the images of $R$ are given a label in the images of $S$ through the natural transformation $\phi$. The elements in the image of $S$ represent musical entities on which the category $\mathbf{C}$ acts, and one would therefore need a way to connect the elements in the image of $R$ with those in the image of $S$. However, one cannot simply consider a collection of functions between the images of $R$ and the images of $S$ in order to label the musical objects in the PK-net. Indeed, one must make sure that two elements in the images of $R$ which are related by a function $R(f)$ (with $f$ being a morphism of $\Delta$) actually correspond to two elements in the images of $S$ related by the function $SF(f)$. The purpose of the natural transformation $\phi$ is thus to ensure the coherence of the whole diagram.

\subsection{Reinterpreting the constitutive elements of a PK-Net}

Figure \ref{fig:KNet} shows a basic Klumpenhouwer network describing a $C$ major triad. The arrows of this network are labelled with specific transformations in the $T$/$I$ group which express the fact that the major triad is made up of two notes, $E$ and $G$, separated from the tonic $C$ by a major third and a fifth respectively.

\begin{figure}
\begin{center}
\begin{tikzpicture}[scale=1]
	\node (A) at (0,0) {$C$};
	\node (B) at (1,-1.3) {$E$};
	\node (C) at (0,-2.3) {$G$};
	\draw[->,>=latex] (A) to node[shift={(0.3,0.15)}]{$T_4$} (B) ;
	\draw[->,>=latex] (B) to node[shift={(0.3,-0.15)}]{$T_3$} (C) ;
	\draw[->,>=latex] (A) to node[left,midway]{$T_7$} (C) ;
\end{tikzpicture}
\end{center}
\caption{A Klumpenhouwer network (K-net) describing a major triad. The arrows are labelled with specific transformations in the $T$/$I$ group relating the pitch classes in their domain and codomain.}
\label{fig:KNet}
\end{figure}
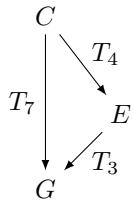

In the terminology of PK-Nets, this network is described by
\begin{enumerate}
\item{a category $\Delta_3$ with three objects $X$, $Y$, and $Z$, and three non-trivial morphisms $f \colon X \to Y$, $g \colon Y \to Z$, and $g \circ f \colon X \to Z$ between them, and}
\item{a category $\mathbf{C}$ taken here to be the $T$/$I$ group, with its usual action $S \colon T\text{/}I \to \mathbf{Sets}$ on the set of the twelve pitch-classes, and}
\item{a functor $F \colon \Delta_3 \to \mathbf{C}$ such that $F(f)=T_4$, $F(g)=T_3$, and $F(g \circ f)=T_7$, and}
\item{a functor $R \colon \Delta_3 \to \mathbf{Sets}$ sending each object of $\Delta_3$ to a singleton, and a natural transformation $\phi$ sending these singletons to the appropriate pitch-classes in the image of $\mathbf{C}$ by $S$.}
\end{enumerate}

Upon examination of these constitutive elements, it readily appears that the nature of this chord, here a major triad, is entirely determined by the category $\Delta_3$ and the functor $F \colon \Delta_3 \to \mathbf{C}$. The images of the morphisms of $\Delta_3$ by $F$ reflect the fact that a major triad is made up of a major third with a minor third stacked above it. This observation is not specific to major triads. Consider for example the chord $\{D, E, G\}$, which is a representative of the set class $\left[0,2,5\right]$. This chord may be described by a PK-Net in which $\Delta_3$ is the same as above, and in which we consider a different functor $F' \colon \Delta_3 \to \mathbf{C}$ such that $F(f)=T_2$, $F(g)=T_5$, and $F(g \circ f)=T_7$.

One may go further by considering pitch-class sets which are not necessarily transpositionnally related. Figure \ref{fig:webernop11} shows an excerpt of Webern's \textit{Three Little Pieces for Cello and Piano, Op. 11/2}, at bars 4-5, along with a PK-net interpretation of each three-note segment. These three-note segments are clearly not related by transposition, yet the corresponding represented networks share the same functor $\Delta_3 \to \mathbf{C}$.

By abstracting this observation, one can consider that these three-note pitch-class sets belong to the same \textit{generalized chord type}, which is defined by this particular functor $\Delta_3 \to \mathbf{C}$. The main point of this paper is to generalize further these observations by considering functors $F \colon \Delta \to \mathbf{C}$ as \textit{generalized musical classes}.

\begin{figure}[t!]
\begin{center}
\subfigure[]{
\includegraphics[scale=0.8]{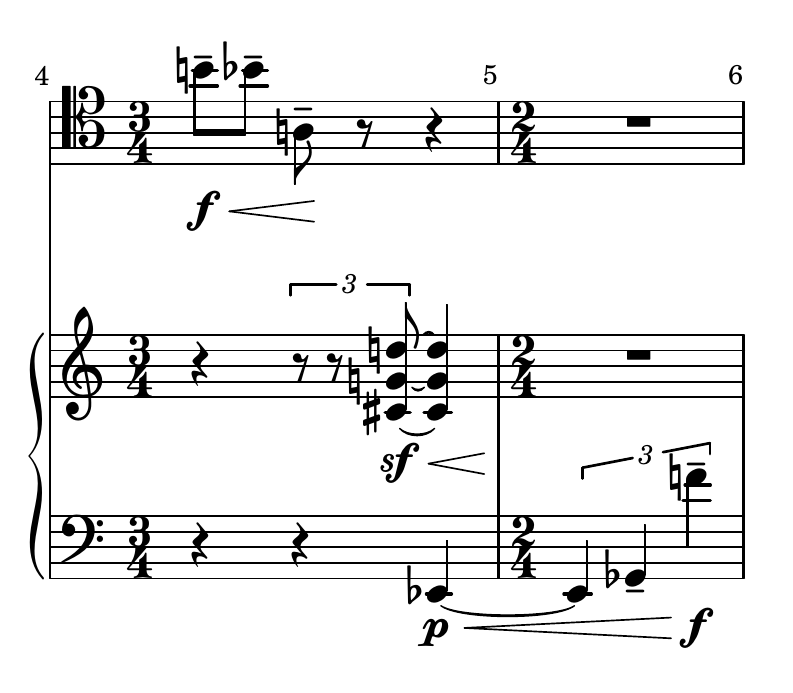}
\label{subfig:webernop11_1}
}
\subfigure[]{
\begin{tikzpicture}
	\node (A) at (0,0) {$A$};
	\node (B) at (1,-1) {$B$};
	\node (C) at (0,-2) {$B_\flat$};
	\draw[->,>=latex] (A) -- (B) node[above,midway]{$I_8$};
	\draw[->,>=latex] (B) -- (C) node[right,midway]{$I_{9}$};
	\draw[->,>=latex] (A) -- (C) node[left,midway]{$T_1$};
	
	\node (D) at (4,0) {$C_\sharp$};
	\node (E) at (5,-1) {$G$};
	\node (F) at (4,-2) {$D$};
	\draw[->,>=latex] (D) -- (E) node[above,midway]{$I_8$};
	\draw[->,>=latex] (E) -- (F) node[right,midway]{$I_{9}$};
	\draw[->,>=latex] (D) -- (F) node[left,midway]{$T_1$};

	\node (G) at (8,0) {$F$};
	\node (H) at (9,-1) {$E_\flat$};
	\node (I) at (8,-2) {$F_\sharp$};
	\draw[->,>=latex] (G) -- (H) node[above,midway]{$I_8$};
	\draw[->,>=latex] (H) -- (I) node[right,midway]{$I_{9}$};
	\draw[->,>=latex] (G) -- (I) node[left,midway]{$T_1$};
\end{tikzpicture}
\label{subfig:webernop11_2}
}
\end{center}
\caption{\subref{subfig:webernop11_1} Webern, Op. 11/2, bars 4-5. \subref{subfig:webernop11_2} PK-nets corresponding to each of three-note segment of \subref{subfig:webernop11_1}.}
\label{fig:webernop11}
\end{figure}

\begin{definition}
Let $\mathbf{C}$ be a category, and $\Delta$ be a small category. A {\it generalized musical class} is a functor $F \colon \Delta \to\mathbf{C}$. When $\mathbf{C}$ is the particular case of the group $T$/$I$, a functor $F \colon \Delta \to\mathbf{C}$ is said to be a {\it generalized chord class}.
\end{definition}

As is well-known in category theory, functors $F \colon \Delta \to\mathbf{C}$ form a category, known as \textit{the category of functors $\mathbf{C}^{\Delta}$}.

\begin{definition}
The category of functors $\mathbf{C}^{\Delta}$ has
\begin{enumerate}
\item{functors $F \colon \Delta \to\mathbf{C}$ as objects, and}
\item{natural transformations $\eta \colon F \to F'$ between functors $F \colon \Delta \to \mathbf{C}$ and $F' \colon \Delta \to \mathbf{C}$ as morphisms.}
\end{enumerate}
\end{definition}

These natural transformations can be seen as generalized musical transformations between the corresponding generalized musical classes. The additional data of functors $R$ and $S$, and of a natural transformation $\phi \colon R \to SF$, leads to individual musical sets derived from the generalized musical class $F \colon \Delta \to \mathbf{C}$.
The purpose of this paper is to investigate the structure of $\mathbf{C}^{\Delta}$ and its action on $\mathbf{Sets}$, and to relate these constructions with known group-theoretical results in transformational music analysis.

One may notice that different categories $\Delta$ and functors $F \colon \Delta \to \mathbf{C}$ may describe the same musical sets. For example, the major triad of Figure \ref{fig:KNet} may also be described using a category $\Gamma$ with three objects $X$, $Y$, and $Z$, and only two non-trivial morphisms $f \colon X \to Y$, $g \colon X \to Z$, between them, and a functor $F'' \colon \Gamma \to \mathbf{C}=T/I$ sending $f$ to $T_4$ and $g$ to $T_7$. This alternative description, shown in Figure \ref{fig:KNet_alt}, focuses on the major third and the fifth without referencing the minor third. Instead of being a limitation, this possibility allows for various transformations between chord types to be examined, as will be seen in the rest of the paper. 

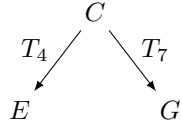
\begin{figure}
\begin{center}
\begin{tikzpicture}[scale=1]
	\node (A) at (0,0) {$C$};
	\node (B) at (1,-1.3) {$G$};
	\node (C) at (-1,-1.3) {$E$};
	\draw[->,>=latex] (A) to node[shift={(0.3,0.15)}]{$T_7$} (B) ;
	\draw[->,>=latex] (A) to node[shift={(-0.3,0.15)}]{$T_4$} (C) ;
\end{tikzpicture}
\end{center}
\caption{An alternative Klumpenhouwer network describing a major triad.}
\label{fig:KNet_alt}
\end{figure}

As stated in the previous subsection, the category $\mathbf{C}$ is often a group in musical applications. The following proposition establishes the structure of the category of functors $\mathbf{C}^{\Delta}$ when $\Delta$ is a poset with a bottom element $O$ and $\mathbf{C}$ is a group $\mathbf{G}$ considered as a single-object category.

\begin{proposition}\label{proposition:GDeltaStruct}
Let $\Delta$ be a poset with a bottom element $O$ and $\mathbf{G}$ be a group considered as a category. Then
\begin{enumerate}
\item{the category of functors $\mathbf{G}^{\Delta}$ is a groupoid, and}
\item{for any two objects $F$ and $F'$ of $\mathbf{G}^{\Delta}$ the hom-set $\text{Hom}(F,F')$ can be bijectively identified with the set of elements of $\mathbf{G}$.}
\end{enumerate}
\begin{proof}
Given two objects $F$ and $F'$ of $\mathbf{G}^{\Delta}$, i.e. two functors $F \colon \Delta \to \mathbf{G}$ and $F' \colon \Delta \to \mathbf{G}$, any natural transformation $\eta \colon F \to F'$ between them is invertible, since the components of $\eta$ are invertible morphisms of $\mathbf{G}$. Thus, the category of functors $\mathbf{G}^{\Delta}$ is a groupoid.
Since $\Delta$ is a poset with a bottom element $O$, the natural transformation $\eta$ is entirely determined by the component $\eta_O$, which can be freely chosen in $\mathbf{G}$. Thus the hom-set $\text{Hom}(F,F')$ can be bijectively identified with the set of elements of $\mathbf{G}$.
\end{proof}
\end{proposition}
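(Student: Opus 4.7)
The plan is to unpack what a natural transformation $\eta \colon F \to F'$ in $\mathbf{G}^{\Delta}$ amounts to, and then exploit the two hypotheses on $\mathbf{G}$ and $\Delta$ separately. Because $\mathbf{G}$ is a one-object category, for every object $X$ of $\Delta$ the component $\eta_X$ is simply an element of $\mathbf{G}$, and for every morphism $f\colon X\to Y$ of $\Delta$ the naturality square reduces to the equation $\eta_Y \cdot F(f) = F'(f) \cdot \eta_X$ in $\mathbf{G}$.

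For part (1), I would observe that every component $\eta_X$ lies in a group and is therefore invertible, and then verify that the family $(\eta_X^{-1})_X$ still satisfies naturality: left- and right-multiplying the naturality equation by appropriate inverses yields exactly the naturality condition for $(\eta_X^{-1})_X$, viewed now as a transformation $F' \to F$. This shows every morphism of $\mathbf{G}^{\Delta}$ is an isomorphism, hence $\mathbf{G}^{\Delta}$ is a groupoid.

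For part (2), I would exploit the bottom element $O$. For each object $X$ there is a unique morphism $u_X \colon O \to X$ in $\Delta$, and naturality applied to $u_X$ gives
$$\eta_X \cdot F(u_X) = F'(u_X) \cdot \eta_O, \qquad \text{hence} \qquad \eta_X = F'(u_X) \cdot \eta_O \cdot F(u_X)^{-1}.$$
This already forces $\eta$ to be determined by $\eta_O$, so the evaluation map $\mathrm{ev}_O \colon \mathrm{Hom}(F,F') \to \mathbf{G}$ sending $\eta \mapsto \eta_O$ is injective. For surjectivity, given any $g\in\mathbf{G}$ I would define $\eta_X := F'(u_X) \cdot g \cdot F(u_X)^{-1}$ for every object $X$ and check that this formula does yield a natural transformation.

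The one step that requires care, and thus the main obstacle, is precisely this last naturality verification for an arbitrary $f \colon X \to Y$ of $\Delta$. The crucial input is that $f \circ u_X = u_Y$ by uniqueness of morphisms out of $O$ in a poset, so $F(u_Y) = F(f)\cdot F(u_X)$ and $F'(u_Y) = F'(f)\cdot F'(u_X)$; substituting these into the defining formulas for $\eta_X$ and $\eta_Y$ makes the $F(u_X)^{\pm 1}$ and $F'(u_X)$ factors telescope, and the naturality square collapses in $\mathbf{G}$. Everything else in the argument is a direct unwinding of the definitions.
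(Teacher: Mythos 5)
Your argument is correct and follows essentially the same route as the paper's proof: invertibility of components gives the groupoid structure, and the bottom element $O$ makes $\eta$ determined by $\eta_O$, which can be chosen freely. You simply make explicit the two points the paper leaves implicit, namely the naturality of the inverse family and the surjectivity of evaluation at $O$ via the telescoping identity $f \circ u_X = u_Y$.
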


Since in such a case the elements of the hom-set $\text{Hom}(F,F')$ can be uniquely identified with the elements of $\mathbf{G}$, we will use the notation ${}^{FF'}g$, with $g \in \mathbf{G}$, to designate an element of $\text{Hom}(F,F')$ in $\mathbf{G}^{\Delta}$.

We now consider a functor $R \colon \Delta \to \mathbf{Sets}$ and a functor $S \colon \mathbf{G} \to \mathbf{Sets}$. For any object $F$ of $\mathbf{G}^{\Delta}$, we denote by $N_F$ the set of all natural transformations $\phi \colon R \to SF$.

\begin{proposition}
There exists a canonical functor $P_{R,S} \colon \mathbf{G}^{\Delta} \to \mathbf{Sets}$ such that for any object $F$ of $\mathbf{G}^{\Delta}$, $P_{R,S}(F)=N_F$.
\begin{proof}
We consider the functor $P_{R,S} \colon \mathbf{G}^{\Delta} \to \mathbf{Sets}$, which sends each object $F$ of $\mathbf{G}^{\Delta}$ to the set $N_F$. Then, given two objects $F$ and $F'$ of $\mathbf{G}^{\Delta}$ and any morphism $\eta \colon F \to F'$ between them, we can construct the image of $\eta$ by $P_{R,S}$ as the map $P_{R,S}(\eta) \colon N_F \to N_{F'}$ sending a natural transformation $\phi$ of $N_F$ to the natural transformation $S\eta \circ \phi$ of $N_{F'}$.
\end{proof}
\end{proposition}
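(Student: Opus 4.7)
The plan is to define $P_{R,S}$ explicitly on objects and morphisms, then check the two functoriality axioms. On objects I simply set $P_{R,S}(F) = N_F$ as prescribed. On a morphism $\eta \colon F \to F'$ of $\mathbf{G}^{\Delta}$, I define $P_{R,S}(\eta)$ as the map $N_F \to N_{F'}$ sending $\phi \colon R \to SF$ to $S\eta \ast \phi := (S\eta)\circ \phi$, where $S\eta \colon SF \to SF'$ is the whiskering of $\eta$ with $S$, whose component at an object $X$ of $\Delta$ is the morphism $S(\eta_X) \colon SF(X) \to SF'(X)$ of $\mathbf{Sets}$.

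The first verification is that $S\eta \circ \phi$ is genuinely a natural transformation $R \Rightarrow SF'$. For this I would take a morphism $f \colon X \to Y$ of $\Delta$ and check the naturality square by pasting two squares: the naturality of $\phi$ (the square for $R(f)$ versus $SF(f)$) stacked on top of the naturality square of $S\eta$ (the square for $SF(f)$ versus $SF'(f)$, which follows from applying the functor $S$ to the naturality square of $\eta$ in $\mathbf{G}^{\Delta}$). Both squares commute, so the outer rectangle commutes, giving naturality of $S\eta \circ \phi$.

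For functoriality I need $P_{R,S}(\mathrm{id}_F) = \mathrm{id}_{N_F}$ and $P_{R,S}(\eta' \circ \eta) = P_{R,S}(\eta') \circ P_{R,S}(\eta)$. The identity axiom reduces to the fact that $S(\mathrm{id}_F)$ is the identity natural transformation on $SF$ (since $S$ sends identity morphisms of $\mathbf{G}$ to identity functions, componentwise), so $S(\mathrm{id}_F) \circ \phi = \phi$. The composition axiom follows from two observations: $S$ preserves composition componentwise, so $S(\eta' \circ \eta) = S\eta' \circ S\eta$ as natural transformations $SF \Rightarrow SF''$; and vertical composition of natural transformations is associative, which gives $(S\eta' \circ S\eta) \circ \phi = S\eta' \circ (S\eta \circ \phi)$.

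The main obstacle, such as it is, is notational: one has to be careful to distinguish the natural transformation $\eta$ in $\mathbf{G}^{\Delta}$ (whose components $\eta_X$ are morphisms in $\mathbf{G}$) from its image $S\eta$ (a natural transformation between functors into $\mathbf{Sets}$, whose components are functions), and to track that the relevant naturality squares are really $S$ applied to naturality squares that hold in $\mathbf{G}$. Once this bookkeeping is in place, every step reduces to a standard identity about horizontal and vertical composition of natural transformations, and no use of the specific structure of $\Delta$ being a poset with a bottom element (established in Proposition \ref{proposition:GDeltaStruct}) is required here.
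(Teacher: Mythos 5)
Your construction is exactly the one the paper uses: $P_{R,S}(F)=N_F$ on objects and $P_{R,S}(\eta)(\phi)=S\eta\circ\phi$ on morphisms. You additionally spell out the routine verifications (that $S\eta\circ\phi$ is natural, and the two functoriality axioms) that the paper leaves implicit, and your remark that the poset hypothesis on $\Delta$ is not needed here is accurate.
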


\subsection{Transformations of generalized chord classes}

We now consider the specific case where the category $\mathbf{G}$ corresponds to the $T\text{/}I$ group. We wish here to give examples of transformations of generalized chord classes, in the particular case where $\Delta$ is the category $\Gamma$ introduced above, and with specific functors $U \colon \Gamma \to \mathbf{G}$ and $V \colon \Gamma \to \mathbf{G}$. Our goal is to detail the structure of the hom-sets $\text{Hom}(U,U)$ and $\text{Hom}(U,V)$. 

We consider the following objects of ${(T\text{/}I)}^{\Gamma}$:

\begin{itemize}
\item{the functor $U \colon \Gamma \to T/I$ sending $f$ to $T_4$ and $g$ to $T_{7}$, and}
\item{the functor $V \colon \Gamma \to T/I$ sending $f$ to $T_2$ and $g$ to $T_{5}$.}
\end{itemize}

These functors model the set classes of prime form [0,4,7] (major triad) and [0,2,5]. We now consider the hom-set $\text{Hom}(U,U)$ in ${(T\text{/}I)}^{\Gamma}$. Let $\eta \colon U \to U$ be a natural transformation. It is uniquely determined by the component $\eta_X$, which is an element of $T\text{/}I$. We can derive the components $\eta_Y$ and $\eta_Z$ depending on $\eta_X$.

\begin{itemize}
\item{If $\eta_X=T_p$, with $p$ in $\{0 \ldots 11\}$, then we must have $\eta_Y T_4 = T_4 T_p$, and $\eta_Z T_{7} = T_{7} T_p$. This leads to $\eta_Y = \eta_Z = T_{p}$.}
\item{If $\eta_X=I_p$, with $p$ in $\{0 \ldots 11\}$, then we must have $\eta_Y T_4 = T_4 I_p$, and $\eta_Z T_{7} = T_{7} I_p$. This leads to $\eta_Y = I_{p+8}$, and $\eta_Z = I_{p+2}$.}
\end{itemize}

The first type of transformation ${}^{UU}T_p$ is well known as it is simply the usual transposition of the set class. The second type of transformation ${}^{UU}I_p$ can be seen as a ``generalized'' inversion. Unlike the known action on triads of the inversions of the $T\text{/}I$ group, wherein the same inversion $I_p$ operates on every pitch class of the chord, the ``generalized'' inversion ${}^{UU}I_p$ has different components for each object of $\Gamma$. The Figure \ref{fig:chord_transformation_1} illustrates the action of the morphism ${}^{UU}I_8$ on the PK-net representing the $F$ major chord, resulting in the $E_\flat$ major chord (the constitutive elements of the PK-net, namely the functor $R$, $S$, and the natural transformation $\phi$, have been omitted in this Figure for clarity).

\begin{figure}
\begin{center}
\subfigure[]{
\begin{tikzpicture}[scale=1]
	\node (A0) at (0,0) {$F$};
	\node (B0) at (-0.5,-1.3) {$A$};
	\node (C0) at (0.5,-1.3) {$C$};
	\draw[->,>=latex] (A0) to node[shift={(-0.3,0.15)}]{$T_4$} (B0) ;
	\draw[->,>=latex] (A0) to node[shift={(0.3,0.15)}]{$T_{7}$} (C0) ;

	\node (A1) at (3,0) {$E\flat$};
	\node (B1) at (2.5,-1.3) {$G$};
	\node (C1) at (3.5,-1.3) {$B\flat$};
	\draw[->,>=latex] (A1) to node[shift={(-0.3,0.15)}]{$T_4$} (B1) ;
	\draw[->,>=latex] (A1) to node[shift={(0.3,0.15)}]{$T_{7}$} (C1) ;
	
	\draw[->,>=latex, red] (1,-0.6)  to node[above]{${}^{UU}I_{8}$} (2,-0.6) ;

	\draw[->,>=latex, violet] (A0) to[bend left=20] node[above]{$I_{8}$} (A1) ;
	\draw[->,>=latex, violet] (B0) to[bend right=20] node[below]{$I_{4}$} (B1) ;
	\draw[->,>=latex, violet] (C0) to[bend right=20] node[below]{$I_{10}$} (C1) ;	
\end{tikzpicture}
\label{subfig:chord_transformation_1_1}
}

\vspace{0.7cm}
\subfigure[]{
\begin{tikzpicture}[scale=0.9] 
	\draw (-5,0) circle[radius=2.0]; 
	\foreach \i in {0,...,11} {
		\node[circle,draw=black,fill=black, fill opacity = 0.05, inner sep=1pt, minimum size=3pt] (A\i) at ({-5.0+2.0*sin(\i*30)},{2.0*cos(\i*30)}) {.};
	}

	\foreach \i in {0,...,23} {
		\node (INV\i) at ({-5.0+3.0*sin(\i*15)},{3.0*cos(\i*15)}) {};
	}

	\foreach \i / \name in {0/$C$,1/$C_\sharp$,2/$D$,3/$E_\flat$,4/$E$,5/$F$,6/$F_\sharp$,7/$G$,8/$G_\sharp$,9/$A$,10/$B_\flat$,11/$B$} {
		\node (N\i) at ({-5.0+2.5*sin(\i*30)},{2.5*cos(\i*30)}) {\name};
	}
	\draw[ draw=black, fill=cyan, fill opacity=0.2] (A5.center) -- (A9.center) -- (A0.center) --cycle;
	\draw[ -, dashed, line width=1.5, color=violet] (INV8) to (INV20); 
	\draw[ -, dashed, line width=1.5, color=violet] (INV4) to (INV16); 
	\draw[ -, dashed, line width=1.5, color=violet] (INV10) to (INV22); 
	\draw[ latex-latex, line width=1.0, color=violet] (A5.center) to (A3.center);
	\draw[ latex-latex, line width=1.0, color=violet] (A0.center) to (A10.center);
	\draw[ latex-latex, line width=1.0, color=violet] (A9.center) to (A7.center);

	\draw[->,>=latex, color=black,line width=2,] (-1.8,0.0) to node[above,midway]{${}^{UU}I_8$} (0.2,0) ;

	\draw (3,0) circle[radius=2.0]; 
	\foreach \i in {0,...,11} {
		\node[circle,draw=black,fill=black, fill opacity = 0.05, inner sep=1pt, minimum size=3pt] (C\i) at ({3+2.0*sin(\i*30)},{2.0*cos(\i*30)}) {.};
	}
	\foreach \i in {0,...,11} {
		\node (D\i) at ({3+2.5*sin(\i*30)},{2.5*cos(\i*30)}) {};
	}
	\draw[ draw=black, fill=cyan, fill opacity=0.2] (C3.center) -- (C7.center) -- (C10.center) --cycle;
\end{tikzpicture}
\label{subfig:chord_transformation_1_2}
}
\end{center}
\caption{\subref{subfig:chord_transformation_1_1} Action of the morphism ${}^{UU}I_8$ of ${(T\text{/}I)}^{\Gamma}$ on the PK-net representing the $F$ major chord, resulting in the $E_\flat$ major chord. The constitutive elements of the PK-nets (the functor $R$, $S$, and the natural transformation $\phi$) have been omitted here for clarity. \subref{subfig:chord_transformation_1_2} Graphical representation of the inversion components of the morphism ${}^{UU}I_8$, and their action on the individual pitch classes of the $F$ major chord.}
\label{fig:chord_transformation_1}
\end{figure}
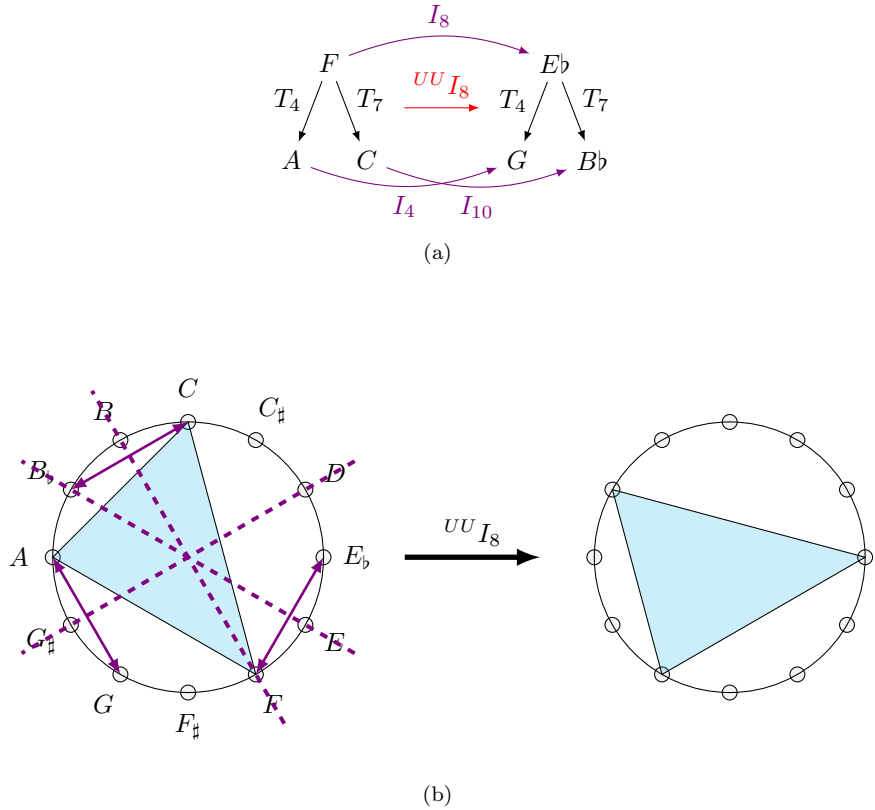

\begin{figure}
\begin{center}
\subfigure[]{
\begin{tikzpicture}[scale=1]
	\node (A0) at (0,0) {$F$};
	\node (B0) at (-0.5,-1.3) {$A$};
	\node (C0) at (0.5,-1.3) {$C$};
	\draw[->,>=latex] (A0) to node[shift={(-0.3,0.15)}]{$T_4$} (B0) ;
	\draw[->,>=latex] (A0) to node[shift={(0.3,0.15)}]{$T_{7}$} (C0) ;

	\node (A1) at (3,0) {$A\flat$};
	\node (B1) at (2.5,-1.3) {$B\flat$};
	\node (C1) at (3.5,-1.3) {$C\sharp$};
	\draw[->,>=latex] (A1) to node[shift={(-0.3,0.15)}]{$T_2$} (B1) ;
	\draw[->,>=latex] (A1) to node[shift={(0.3,0.15)}]{$T_{5}$} (C1) ;
	
	\draw[->,>=latex, red] (1,-0.6)  to node[above]{${}^{UV}T_{3}$} (2,-0.6) ;

	\draw[->,>=latex, violet] (A0) to[bend left=20] node[above]{$T_{3}$} (A1) ;
	\draw[->,>=latex, violet] (B0) to[bend right=20] node[below]{$T_{1}$} (B1) ;
	\draw[->,>=latex, violet] (C0) to[bend right=20] node[below]{$T_{1}$} (C1) ;
	
	\node (A2) at (6,0) {$F$};
	\node (B2) at (5.5,-1.3) {$A$};
	\node (C2) at (6.5,-1.3) {$C$};
	\draw[->,>=latex] (A2) to node[shift={(-0.3,0.15)}]{$T_4$} (B2) ;
	\draw[->,>=latex] (A2) to node[shift={(0.3,0.15)}]{$T_{7}$} (C2) ;

	\node (A3) at (9,0) {$A\flat$};
	\node (B3) at (8.5,-1.3) {$B\flat$};
	\node (C3) at (9.5,-1.3) {$C\sharp$};
	\draw[->,>=latex] (A3) to node[shift={(-0.3,0.15)}]{$T_2$} (B3) ;
	\draw[->,>=latex] (A3) to node[shift={(0.3,0.15)}]{$T_{5}$} (C3) ;
	
	\draw[->,>=latex, red] (7,-0.6)  to node[above]{${}^{UV}I_{1}$} (8,-0.6) ;

	\draw[->,>=latex, violet] (A2) to[bend left=20] node[above]{$I_{1}$} (A3) ;
	\draw[->,>=latex, violet] (B2) to[bend right=20] node[below]{$I_{7}$} (B3) ;
	\draw[->,>=latex, violet] (C2) to[bend right=20] node[below]{$I_{1}$} (C3) ;	
	
\end{tikzpicture}
\label{subfig:chord_transformation_2_1}
}
\subfigure[]{
\begin{tikzpicture}[scale=0.9]
	\draw (-5,0) circle[radius=2.0]; 
	\foreach \i in {0,...,11} {
		\node[circle,draw=black,fill=black, fill opacity = 0.05, inner sep=1pt, minimum size=3pt] (A\i) at ({-5.0+2.0*sin(\i*30)},{2.0*cos(\i*30)}) {.};
	}

	\foreach \i in {0,...,11} {
		\node (TRANSP\i) at ({-5.0+3.0*sin(\i*30)},{3.0*cos(\i*30)}) {};
	}

	\foreach \i / \name in {0/$C$,1/$C_\sharp$,2/$D$,3/$E_\flat$,4/$E$,5/$F$,6/$F_\sharp$,7/$G$,8/$G_\sharp$,9/$A$,10/$B_\flat$,11/$B$} {
		\node (N\i) at ({-5.0+2.5*sin(\i*30)},{2.5*cos(\i*30)}) {\name};
	}
	\draw[ draw=black, fill=cyan, fill opacity=0.2] (A5.center) -- (A9.center) -- (A0.center) --cycle;
	\draw[ -latex, line width=1.0, color=violet] (TRANSP5) to[bend left=40] (TRANSP8);
	\draw[ -latex, line width=1.0, color=violet] (TRANSP9) to[bend left=20] (TRANSP10);
	\draw[ -latex, line width=1.0, color=violet] (TRANSP0) to[bend left=20] (TRANSP1);

	\draw[->,>=latex, color=black,line width=2,] (-1.8,0.0) to node[above,midway]{${}^{UV}T_3$} (0.2,0) ;

	\draw (3,0) circle[radius=2.0]; 
	\foreach \i in {0,...,11} {
		\node[circle,draw=black,fill=black, fill opacity = 0.05, inner sep=1pt, minimum size=3pt] (C\i) at ({3+2.0*sin(\i*30)},{2.0*cos(\i*30)}) {.};
	}
	\foreach \i in {0,...,11} {
		\node (D\i) at ({3+2.5*sin(\i*30)},{2.5*cos(\i*30)}) {};
	}
	\draw[ draw=black, fill=cyan, fill opacity=0.2] (C8.center) -- (C10.center) -- (C1.center) --cycle;
\end{tikzpicture}
\label{subfig:chord_transformation_2_2}
}
\subfigure[]{
\begin{tikzpicture}[scale=0.9]
	\draw (-5,0) circle[radius=2.0]; 
	\foreach \i in {0,...,11} {
		\node[circle,draw=black,fill=black, fill opacity = 0.05, inner sep=1pt, minimum size=3pt] (A\i) at ({-5.0+2.0*sin(\i*30)},{2.0*cos(\i*30)}) {.};
	}

	\foreach \i in {0,...,23} {
		\node (INV\i) at ({-5.0+3.0*sin(\i*15)},{3.0*cos(\i*15)}) {};
	}

	\foreach \i / \name in {0/$C$,1/$C_\sharp$,2/$D$,3/$E_\flat$,4/$E$,5/$F$,6/$F_\sharp$,7/$G$,8/$G_\sharp$,9/$A$,10/$B_\flat$,11/$B$} {
		\node (N\i) at ({-5.0+2.5*sin(\i*30)},{2.5*cos(\i*30)}) {\name};
	}
	\draw[ draw=black, fill=cyan, fill opacity=0.2] (A5.center) -- (A9.center) -- (A0.center) --cycle;
	\draw[ -, dashed, line width=1.5, color=violet] (INV1) to (INV13); 
	\draw[ -, dashed, line width=1.5, color=violet] (INV7) to (INV19); 
	\draw[ latex-latex, line width=1.0, color=violet] (A5.center) to (A8.center);
	\draw[ latex-latex, line width=1.0, color=violet] (A0.center) to (A1.center);
	\draw[ latex-latex, line width=1.0, color=violet] (A9.center) to (A10.center);

	\draw[->,>=latex, color=black,line width=2,] (-1.8,0.0) to node[above,midway]{${}^{UV}I_1$} (0.2,0) ;

	\draw (3,0) circle[radius=2.0]; 
	\foreach \i in {0,...,11} {
		\node[circle,draw=black,fill=black, fill opacity = 0.05, inner sep=1pt, minimum size=3pt] (C\i) at ({3+2.0*sin(\i*30)},{2.0*cos(\i*30)}) {.};
	}
	\foreach \i in {0,...,11} {
		\node (D\i) at ({3+2.5*sin(\i*30)},{2.5*cos(\i*30)}) {};
	}
	\draw[ draw=black, fill=cyan, fill opacity=0.2] (C8.center) -- (C10.center) -- (C1.center) --cycle;
\end{tikzpicture}
\label{subfig:chord_transformation_2_3}
}
\end{center}
\caption{\subref{subfig:chord_transformation_2_1} Action of the morphisms ${}^{UV}T_3$ and ${}^{UV}I_1$ of ${(T\text{/}I)}^{\Gamma}$ on the PK-net representing the $F$ major chord, resulting in the \{$A\flat$, $B\flat$,$C\sharp$\} chord. The constitutive elements of the PK-nets (the functor $R$, $S$, and the natural transformation $\phi$) have been omitted here for clarity. \subref{subfig:chord_transformation_2_2} and \subref{subfig:chord_transformation_2_3} Graphical representation of the transposition and inversion components of the morphisms ${}^{UV}T_3$ and ${}^{UV}I_1$, and their action on the individual pitch classes of the $F$ major chord.}
\label{fig:chord_transformation_2}
\end{figure}

In the same way, we can consider the hom-set $\text{Hom}(U,V)$ in ${(T\text{/}I)}^{\Gamma}$. Let $\eta \colon U \to V$ be a natural transformation. It is uniquely determined by the component $\eta_X$, which is an element of $T$/$I$. We can derive the components $\eta_Y$ and $\eta_Z$ depending on $\eta_X$.

\begin{itemize}
\item{If $\eta_X=T_p$, with $p$ in $\{0 \ldots 11\}$, then we must have $\eta_Y T_4 = T_2 T_p$, and $\eta_Z T_{7} = T_{5} T_p$. This leads to $\eta_Y = T_{p+10}$, and $\eta_Z = T_{p+10}$.}
\item{If $\eta_X=I_p$, with $p$ in $\{0 \ldots 11\}$, then we must have $\eta_Y T_4 = T_2 I_p$, and $\eta_Z T_{7} = T_{5} I_p$. This leads to $\eta_Y = I_{p+6}$, and $\eta_Z = I_{p}$.}
\end{itemize}

As before the morphisms of $\text{Hom}(U,V)$ have different components for each object of $\Gamma$. The morphisms ${}^{UV}T_p$ can be considered similarly as before as ``generalized'' transpositions, and morphisms ${}^{UV}I_p$ as ``generalized'' inversions between objects $U$ and $V$. The Figure \ref{fig:chord_transformation_2} illustrates the action of these morphism on the PK-net representing the $F$ major chord, resulting in the \{$A\flat$, $B\flat$,$C\sharp$\} chord (the constitutive elements of the PK-net, namely the functor $R$, $S$, and the natural transformation $\phi$, have been omitted in this Figure for clarity).

\subsection{An application to Berg's Op. 5/2}

To illustrate the above concepts, we will focus on a small atonal example from Berg's \textit{Four pieces for clarinet and piano, Op. 5/2}. Figure \ref{fig:bergop5} shows a reduction of the piano right hand part at bars 5-6. To analyse this progression, we consider the group $\mathbf{G}=T\text{/}I$, the category $\Gamma$ described above, and the corresponding groupoid of functors ${(T\text{/}I)}^{\Gamma}$. In particular we consider the following objects of ${(T\text{/}I)}^{\Gamma}$:

\begin{itemize}
\item{the functor $U \colon \Gamma \to T\text{/}I$ sending $f$ to $I_3$ and $g$ to $I_{10}$,}
\item{the functor $U' \colon \Gamma \to T\text{/}I$ sending $f$ to $I_7$ and $g$ to $I_{3}$,}
\item{the functor $V \colon \Gamma \to T\text{/}I$ sending $f$ to $I_4$ and $g$ to $I_{10}$, and}
\item{the functor $W \colon \Gamma \to T\text{/}I$ sending $f$ to $I_8$ and $g$ to $I_{3}$.}
\end{itemize}

We also consider  the functor $R \colon \Gamma \to \mathbf{Sets}$ sending each object of $\Gamma$ to a singleton, and the functor $S \colon T\text{/}I \to \mathbf{Sets}$ given by the action of the $T$/$I$ group on the set of the twelve pitch-classes. It can then easily be checked that the first five chords of the progression of Figure \ref{fig:bergop5} are instances of PK-nets using $R$ and $S$, and whose functor from $\Gamma$ to $T\text{/}I$ is either $U$ or $V$, as shown in Figure \ref{subfig:KNet_Berg_1_1}. Similarly, the last four chords of the progression of Figure \ref{fig:bergop5} are instances of PK-nets whose functor from $\Gamma$ to $T\text{/}I$ is either $U'$ or $W$, as shown in Figure \ref{subfig:KNet_Berg_2_1}.

\begin{figure}
\centering
\includegraphics[scale=0.65]{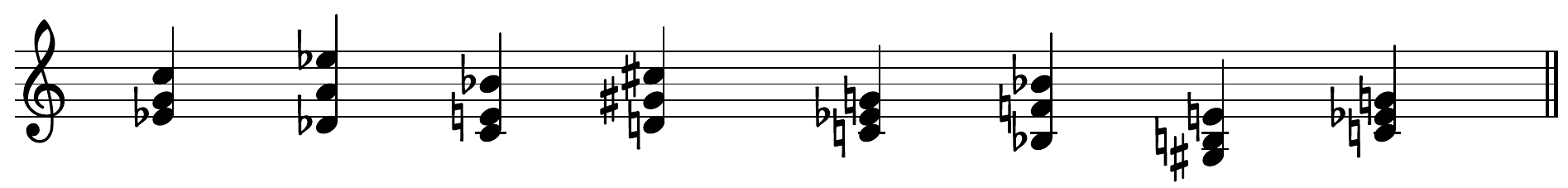}

\caption{Berg, Op. 5/2, reduction of the piano right hand part at bars 5-6.}
\label{fig:bergop5}
\end{figure}

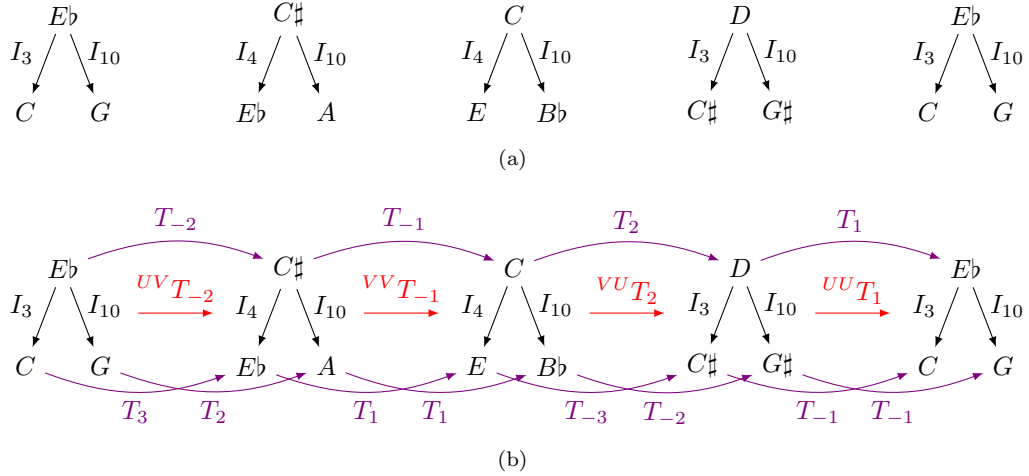
\begin{figure}
\begin{center}
\subfigure[]{
\begin{tikzpicture}[scale=1]
	\node (A0) at (0,0) {$E\flat$};
	\node (B0) at (-0.5,-1.3) {$C$};
	\node (C0) at (0.5,-1.3) {$G$};
	\draw[->,>=latex] (A0) to node[shift={(-0.3,0.15)}]{$I_3$} (B0) ;
	\draw[->,>=latex] (A0) to node[shift={(0.3,0.15)}]{$I_{10}$} (C0) ;

	\node (A1) at (3,0) {$C\sharp$};
	\node (B1) at (2.5,-1.3) {$E\flat$};
	\node (C1) at (3.5,-1.3) {$A$};
	\draw[->,>=latex] (A1) to node[shift={(-0.3,0.15)}]{$I_4$} (B1) ;
	\draw[->,>=latex] (A1) to node[shift={(0.3,0.15)}]{$I_{10}$} (C1) ;

	\node (A2) at (6,0) {$C$};
	\node (B2) at (5.5,-1.3) {$E$};
	\node (C2) at (6.5,-1.3) {$B\flat$};
	\draw[->,>=latex] (A2) to node[shift={(-0.3,0.15)}]{$I_4$} (B2) ;
	\draw[->,>=latex] (A2) to node[shift={(0.3,0.15)}]{$I_{10}$} (C2) ;

	\node (A3) at (9,0) {$D$};
	\node (B3) at (8.5,-1.3) {$C\sharp$};
	\node (C3) at (9.5,-1.3) {$G\sharp$};
	\draw[->,>=latex] (A3) to node[shift={(-0.3,0.15)}]{$I_3$} (B3) ;
	\draw[->,>=latex] (A3) to node[shift={(0.3,0.15)}]{$I_{10}$} (C3) ;

	\node (A4) at (12,0) {$E\flat$};
	\node (B4) at (11.5,-1.3) {$C$};
	\node (C4) at (12.5,-1.3) {$G$};
	\draw[->,>=latex] (A4) to node[shift={(-0.3,0.15)}]{$I_3$} (B4) ;
	\draw[->,>=latex] (A4) to node[shift={(0.3,0.15)}]{$I_{10}$} (C4) ;
\end{tikzpicture}
\label{subfig:KNet_Berg_1_1}
}
\subfigure[]{
\begin{tikzpicture}[scale=1]
	\node (A0) at (0,0) {$E\flat$};
	\node (B0) at (-0.5,-1.3) {$C$};
	\node (C0) at (0.5,-1.3) {$G$};
	\draw[->,>=latex] (A0) to node[shift={(-0.3,0.15)}]{$I_3$} (B0) ;
	\draw[->,>=latex] (A0) to node[shift={(0.3,0.15)}]{$I_{10}$} (C0) ;

	\node (A1) at (3,0) {$C\sharp$};
	\node (B1) at (2.5,-1.3) {$E\flat$};
	\node (C1) at (3.5,-1.3) {$A$};
	\draw[->,>=latex] (A1) to node[shift={(-0.3,0.15)}]{$I_4$} (B1) ;
	\draw[->,>=latex] (A1) to node[shift={(0.3,0.15)}]{$I_{10}$} (C1) ;

	\node (A2) at (6,0) {$C$};
	\node (B2) at (5.5,-1.3) {$E$};
	\node (C2) at (6.5,-1.3) {$B\flat$};
	\draw[->,>=latex] (A2) to node[shift={(-0.3,0.15)}]{$I_4$} (B2) ;
	\draw[->,>=latex] (A2) to node[shift={(0.3,0.15)}]{$I_{10}$} (C2) ;

	\node (A3) at (9,0) {$D$};
	\node (B3) at (8.5,-1.3) {$C\sharp$};
	\node (C3) at (9.5,-1.3) {$G\sharp$};
	\draw[->,>=latex] (A3) to node[shift={(-0.3,0.15)}]{$I_3$} (B3) ;
	\draw[->,>=latex] (A3) to node[shift={(0.3,0.15)}]{$I_{10}$} (C3) ;

	\node (A4) at (12,0) {$E\flat$};
	\node (B4) at (11.5,-1.3) {$C$};
	\node (C4) at (12.5,-1.3) {$G$};
	\draw[->,>=latex] (A4) to node[shift={(-0.3,0.15)}]{$I_3$} (B4) ;
	\draw[->,>=latex] (A4) to node[shift={(0.3,0.15)}]{$I_{10}$} (C4) ;
	
	\draw[->,>=latex, red] (1,-0.6)  to node[above]{${}^{UV}T_{-2}$} (2,-0.6) ;
	\draw[->,>=latex, red] (4,-0.6)  to node[above]{${}^{VV}T_{-1}$} (5,-0.6) ;
	\draw[->,>=latex, red] (7,-0.6)  to node[above]{${}^{VU}T_{2}$} (8,-0.6) ;
	\draw[->,>=latex, red] (10,-0.6)  to node[above]{${}^{UU}T_{1}$} (11,-0.6) ;

	\draw[->,>=latex, violet] (A0) to[bend left=20] node[above]{$T_{-2}$} (A1) ;
	\draw[->,>=latex, violet] (B0) to[bend right=20] node[below]{$T_{3}$} (B1) ;
	\draw[->,>=latex, violet] (C0) to[bend right=20] node[below]{$T_{2}$} (C1) ;

	\draw[->,>=latex, violet] (A1) to[bend left=20] node[above]{$T_{-1}$} (A2) ;
	\draw[->,>=latex, violet] (B1) to[bend right=20] node[below]{$T_{1}$} (B2) ;
	\draw[->,>=latex, violet] (C1) to[bend right=20] node[below]{$T_{1}$} (C2) ;

	\draw[->,>=latex, violet] (A2) to[bend left=20] node[above]{$T_{2}$} (A3) ;
	\draw[->,>=latex, violet] (B2) to[bend right=20] node[below]{$T_{-3}$} (B3) ;
	\draw[->,>=latex, violet] (C2) to[bend right=20] node[below]{$T_{-2}$} (C3) ;

	\draw[->,>=latex, violet] (A3) to[bend left=20] node[above]{$T_{1}$} (A4) ;
	\draw[->,>=latex, violet] (B3) to[bend right=20] node[below]{$T_{-1}$} (B4) ;
	\draw[->,>=latex, violet] (C3) to[bend right=20] node[below]{$T_{-1}$} (C4) ;	
\end{tikzpicture}
\label{subfig:KNet_Berg_1_2}
}
\end{center}
\caption{\subref{subfig:KNet_Berg_1_1} A PK-net interpretation of the first five chords of the progression of Figure \ref{fig:bergop5}. \subref{subfig:KNet_Berg_1_2} Analysis of the chord progression using morphisms of ${(T\text{/}I)}^{\Gamma}$.}
\label{fig:KNet_Berg_1}
\end{figure}

\begin{figure}
\begin{center}
\subfigure[]{
\begin{tikzpicture}[scale=1]
	\node (A0) at (0,0) {$C$};
	\node (B0) at (-0.5,-1.3) {$G$};
	\node (C0) at (0.5,-1.3) {$E\flat$};
	\draw[->,>=latex] (A0) to node[shift={(-0.3,0.15)}]{$I_7$} (B0) ;
	\draw[->,>=latex] (A0) to node[shift={(0.3,0.15)}]{$I_{3}$} (C0) ;

	\node (A1) at (3,0) {$B\flat$};
	\node (B1) at (2.5,-1.3) {$B\flat$};
	\node (C1) at (3.5,-1.3) {$F$};
	\draw[->,>=latex] (A1) to node[shift={(-0.3,0.15)}]{$I_8$} (B1) ;
	\draw[->,>=latex] (A1) to node[shift={(0.3,0.15)}]{$I_{3}$} (C1) ;

	\node (A2) at (6,0) {$B$};
	\node (B2) at (5.5,-1.3) {$G\sharp$};
	\node (C2) at (6.5,-1.3) {$E$};
	\draw[->,>=latex] (A2) to node[shift={(-0.3,0.15)}]{$I_7$} (B2) ;
	\draw[->,>=latex] (A2) to node[shift={(0.3,0.15)}]{$I_{3}$} (C2) ;

	\node (A3) at (9,0) {$C$};
	\node (B3) at (8.5,-1.3) {$G$};
	\node (C3) at (9.5,-1.3) {$E\flat$};
	\draw[->,>=latex] (A3) to node[shift={(-0.3,0.15)}]{$I_7$} (B3) ;
	\draw[->,>=latex] (A3) to node[shift={(0.3,0.15)}]{$I_{3}$} (C3) ;
\end{tikzpicture}
\label{subfig:KNet_Berg_2_1}
}
\subfigure[]{
\begin{tikzpicture}[scale=1]
	\node (A0) at (0,0) {$C$};
	\node (B0) at (-0.5,-1.3) {$G$};
	\node (C0) at (0.5,-1.3) {$E\flat$};
	\draw[->,>=latex] (A0) to node[shift={(-0.3,0.15)}]{$I_7$} (B0) ;
	\draw[->,>=latex] (A0) to node[shift={(0.3,0.15)}]{$I_{3}$} (C0) ;

	\node (A1) at (3,0) {$B\flat$};
	\node (B1) at (2.5,-1.3) {$B\flat$};
	\node (C1) at (3.5,-1.3) {$F$};
	\draw[->,>=latex] (A1) to node[shift={(-0.3,0.15)}]{$I_8$} (B1) ;
	\draw[->,>=latex] (A1) to node[shift={(0.3,0.15)}]{$I_{3}$} (C1) ;

	\node (A2) at (6,0) {$B$};
	\node (B2) at (5.5,-1.3) {$G\sharp$};
	\node (C2) at (6.5,-1.3) {$E$};
	\draw[->,>=latex] (A2) to node[shift={(-0.3,0.15)}]{$I_7$} (B2) ;
	\draw[->,>=latex] (A2) to node[shift={(0.3,0.15)}]{$I_{3}$} (C2) ;

	\node (A3) at (9,0) {$C$};
	\node (B3) at (8.5,-1.3) {$G$};
	\node (C3) at (9.5,-1.3) {$E\flat$};
	\draw[->,>=latex] (A3) to node[shift={(-0.3,0.15)}]{$I_7$} (B3) ;
	\draw[->,>=latex] (A3) to node[shift={(0.3,0.15)}]{$I_{3}$} (C3) ;
	
	\draw[->,>=latex, red] (1,-0.6)  to node[above]{${}^{U'W}T_{-2}$} (2,-0.6) ;
	\draw[->,>=latex, red] (4,-0.6)  to node[above]{${}^{WU'}T_{1}$} (5,-0.6) ;
	\draw[->,>=latex, red] (7,-0.6)  to node[above]{${}^{U'U'}T_{1}$} (8,-0.6) ;

	\draw[->,>=latex, violet] (A0) to[bend left=20] node[above]{$T_{-2}$} (A1) ;
	\draw[->,>=latex, violet] (B0) to[bend right=20] node[below]{$T_{3}$} (B1) ;
	\draw[->,>=latex, violet] (C0) to[bend right=20] node[below]{$T_{2}$} (C1) ;

	\draw[->,>=latex, violet] (A1) to[bend left=20] node[above]{$T_{1}$} (A2) ;
	\draw[->,>=latex, violet] (B1) to[bend right=20] node[below]{$T_{-2}$} (B2) ;
	\draw[->,>=latex, violet] (C1) to[bend right=20] node[below]{$T_{-1}$} (C2) ;
	
	\draw[->,>=latex, violet] (A2) to[bend left=20] node[above]{$T_{1}$} (A3) ;
	\draw[->,>=latex, violet] (B2) to[bend right=20] node[below]{$T_{-1}$} (B3) ;
	\draw[->,>=latex, violet] (C2) to[bend right=20] node[below]{$T_{-1}$} (C3) ;
\end{tikzpicture}
\label{subfig:KNet_Berg_2_2}
}
\end{center}
\caption{\subref{subfig:KNet_Berg_2_1} A PK-net interpretation of the last four chords of the progression of Figure \ref{fig:bergop5}. \subref{subfig:KNet_Berg_2_2} Analysis of the chord progression using morphisms of ${(T\text{/}I)}^{\Gamma}$.}
\label{fig:KNet_Berg_2}
\end{figure}
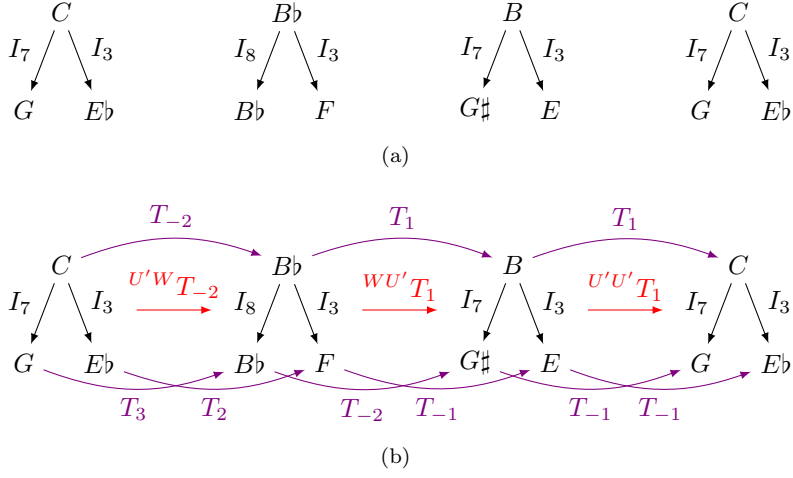

We now consider the hom-set $\text{Hom}(U,V)$ in ${(T\text{/}I)}^{\Gamma}$. Let $\eta \colon U \to V$ be a natural transformation. It is uniquely determined by the component $\eta_X$, which is an element of $T$/$I$. We can derive the components $\eta_Y$ and $\eta_Z$ depending on $\eta_X$.

\begin{itemize}
\item{If $\eta_X=T_p$, with $p$ in $\{0 \ldots 11\}$, then we must have $\eta_Y I_3 = I_4 T_p$, and $\eta_Z I_{10} = I_{10} T_p$. This leads to $\eta_Y = T_{1-p}$, and $\eta_Z = T_{-p}$.}
\item{If $\eta_X=I_p$, with $p$ in $\{0 \ldots 11\}$, then we must have $\eta_Y I_3 = I_4 I_p$, and $\eta_Z I_{10} = I_{10} I_p$. This leads to $\eta_Y = I_{7-p}$, and $\eta_Z = I_{8-p}$.}
\end{itemize}

In a similar way, one can derive the structure of the hom-sets $\text{Hom}(U,U)$, $\text{Hom}(V,V)$, $\text{Hom}(U',W)$, etc. Returning to the PK-nets of Figure \ref{fig:KNet_Berg_1}, it can readily be seen that this progression can be analyzed through the successive application of ${}^{UV}T_{-2}$, ${}^{VV}T_{-1}$, ${}^{VU}T_{2}$, and ${}^{UU}T_{1}$, as shown in Figure \ref{subfig:KNet_Berg_1_2}.

Similarly the progression of the chords represented by the PK-nets of Figure \ref{fig:KNet_Berg_2} can be analyzed through the successive application of ${}^{U'W}T_{-2}$, ${}^{WU'}T_{1}$, and ${}^{U'U'}T_{1}$, as shown in Figure \ref{subfig:KNet_Berg_2_2}. One should observe in particular that ${}^{VU}T_{2} \circ {}^{VV}T_{-1} = {}^{VU}T_{1}$, which has the same components as ${}^{WU'}T_{1}$, evidencing the logic at work behind this progression of chords.

\subsection{Construction of sub-groupoids of $\mathbf{G}^{\Delta}$ and their application in music}

In the examples considered in subsections 1.3 and 1.4, for any object $U$ of the groupoid ${(T\text{/}I)}^{\Gamma}$, the hom-set $\text{Hom}(U,U)$ can be bijectively identified with elements of the $T\text{/}I$ group, and thus contains ``generalized'' transpositions and inversions. For transpositionally-related chords however, it may be useful to consider only a sub-category of ${(T\text{/}I)}^{\Gamma}$ wherein the hom-set $\text{Hom}(U,U)$ only contains transposition-like morphisms. The purpose of this subsection is to show how such a sub-category can be constructed by exploiting the extension structure of the ${T\text{/}I}$ group.

We consider the general case where $\mathbf{G}$ is an extension $1 \to \mathbf{Z} \to \mathbf{G} \to \mathbf{H} \to 1$. This is the case for the $T$/$I$ group for example, which is an extension of the form $1 \to \mathbb{Z}_{12} \to T/I \to \mathbb{Z}_2 \to 1$. The elements of $\mathbf{G}$ can then be uniquely written as $g=(z,h)$ with $z \in \mathbf{Z}$, and $h \in \mathbf{H}$.

Given a poset $\Delta$, we define a functor $\Pi \colon \mathbf{G}^{\Delta} \to \mathbf{H}^{\Delta}$ induced by the homomorphism $\pi \colon \mathbf{G} \to \mathbf{H}$ as follows.

\begin{definition}
For a given poset $\Delta$ with a bottom element, the functor $\Pi \colon \mathbf{G}^{\Delta} \to \mathbf{H}^{\Delta}$ induced by the homomorphism $\pi \colon G \to H$, is the functor which
\begin{itemize}
\item{is the identity on objects, and}
\item{sends any morphism ${}^{FF'}g = {}^{FF'}(z,h)$ in $\mathbf{G}^{\Delta}$ to ${}^{FF'}\pi(g)={}^{FF'}h$ in $\mathbf{H}^{\Delta}$.}
\end{itemize}
\end{definition}

By PropositionÊ \ref{proposition:GDeltaStruct}, we deduce immediately that the functor $\Pi$ is full.

We now consider a sub-category $\widetilde{\mathbf{H}^{\Delta}}$ of $\mathbf{H}^{\Delta}$ such that

\begin{itemize}
\item{for any object $U$ of $\widetilde{\mathbf{H}^{\Delta}}$, $\text{End}(U)$ is trivial, and}
\item{the inclusion functor $\iota \colon \widetilde{\mathbf{H}^{\Delta}} \to \mathbf{H}^{\Delta}$ is the identity on objects.}
\end{itemize}

It is obvious to see that for any objects $U$ and $V$ of $\widetilde{\mathbf{H}^{\Delta}}$, the hom-set $\text{Hom}(U,V)$ is reduced to a singleton which can be identified with one element of $H$. The choice of hom-sets $\text{Hom}(U,V)$ is not unique and determines the sub-category $\widetilde{\mathbf{H}^{\Delta}}$.

We now arrive to the definition of the desired category $\widetilde{\mathbf{G}^{\Delta}}$.

\begin{definition}
The category $\widetilde{\mathbf{G}^{\Delta}}$ is defined as the pull-back of the following diagram.
\begin{center}
\begin{tikzpicture}[scale=1]
	\node (A) at (0,0) {$\widetilde{\mathbf{G}^{\Delta}}$};
	\node (B) at (2,0) {$\widetilde{\mathbf{H}^{\Delta}}$};
	\node (C) at (0,-2) {$\mathbf{G}^{\Delta}$};
	\node (D) at (2,-2) {$\mathbf{H}^{\Delta}$};
	
	\draw[->,dashed,>=latex] (A) to (B) ;
	\draw[->,>=latex] (B) to node[right]{$\iota$} (D) ;
	\draw[->,dashed,>=latex] (A) to (C) ;
	\draw[->,>=latex] (C) to node[above]{$\Pi$} (D) ;
\end{tikzpicture}
\end{center}
\end{definition}

The following propositions are immediate from the definition.

\begin{proposition}
For any object $U$ of $\widetilde{\mathbf{G}^{\Delta}}$, the endomorphism group $\text{End}(U)$ is isomorphic to $\mathbf{Z}$.
\end{proposition}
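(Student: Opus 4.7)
The plan is to unwind the pullback definition and then apply Proposition \ref{proposition:GDeltaStruct} to count endomorphisms explicitly. First I would describe the pullback concretely: since both $\Pi$ and $\iota$ are the identity on objects, the pullback $\widetilde{\mathbf{G}^{\Delta}}$ has the same objects as $\mathbf{G}^{\Delta}$ (equivalently as $\mathbf{H}^{\Delta}$), and a morphism between two objects $U,V$ is a pair $(\eta,\tilde{\eta})$ with $\eta \in \mathrm{Hom}_{\mathbf{G}^{\Delta}}(U,V)$ and $\tilde{\eta} \in \mathrm{Hom}_{\widetilde{\mathbf{H}^{\Delta}}}(U,V)$ satisfying $\Pi(\eta)=\iota(\tilde{\eta})$. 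Composition is componentwise, so the projection to $\mathbf{G}^{\Delta}$ is faithful and $\mathrm{End}_{\widetilde{\mathbf{G}^{\Delta}}}(U)$ embeds as a subgroup of $\mathrm{End}_{\mathbf{G}^{\Delta}}(U)$.

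Next I would specialize to endomorphisms of a fixed object $U$. By the defining property of $\widetilde{\mathbf{H}^{\Delta}}$, $\mathrm{End}_{\widetilde{\mathbf{H}^{\Delta}}}(U)$ is trivial, so the second component of any endomorphism must be $\mathrm{id}_U$, whence the condition $\Pi(\eta)=\iota(\tilde{\eta})$ forces $\Pi(\eta)=\mathrm{id}_U$ in $\mathbf{H}^{\Delta}$. By Proposition \ref{proposition:GDeltaStruct}, any $\eta \in \mathrm{End}_{\mathbf{G}^{\Delta}}(U)$ is uniquely of the form ${}^{UU}g$ for some $g \in \mathbf{G}$, and by the definition of $\Pi$ one has $\Pi({}^{UU}g)={}^{UU}\pi(g)$. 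The condition $\Pi(\eta)=\mathrm{id}_U={}^{UU}1$ is therefore equivalent to $\pi(g)=1$, i.e.\ to $g \in \ker\pi = \mathbf{Z}$.

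Finally I would upgrade this bijection to a group isomorphism. Composition in $\mathbf{G}^{\Delta}$ restricted to $\mathrm{End}(U)$ is given, at the bottom component $O$, by multiplication in $\mathbf{G}$, because natural transformations compose componentwise and the composition of morphisms in the single-object category $\mathbf{G}$ is the group law. Hence the assignment ${}^{UU}g \mapsto g$ identifies $\mathrm{End}_{\mathbf{G}^{\Delta}}(U)$ with $\mathbf{G}$ as groups, and restricts to an isomorphism between $\mathrm{End}_{\widetilde{\mathbf{G}^{\Delta}}}(U)$ and $\mathbf{Z}$.

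The only mildly delicate step is making sure the pullback is taken in the correct $2$-categorical sense so that morphisms really are such matching pairs and composition is componentwise; once this is pinned down the rest is a direct application of Proposition \ref{proposition:GDeltaStruct} together with exactness of the extension $1 \to \mathbf{Z} \to \mathbf{G} \to \mathbf{H} \to 1$.
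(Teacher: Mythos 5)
Your argument is correct and fills in exactly what the paper leaves implicit: the authors state this proposition (together with the next one) as ``immediate from the definition'' and give no written proof, and your unwinding of the strict pullback --- second component forced to be $\mathrm{id}_U$ by triviality of $\mathrm{End}_{\widetilde{\mathbf{H}^{\Delta}}}(U)$, hence $\pi(g)=1$ and $g\in\ker\pi=\mathbf{Z}$, with the group structure carried by the component at the bottom element $O$ via Proposition~\ref{proposition:GDeltaStruct} --- is precisely the intended reasoning. Your closing worry about the $2$-categorical sense of the pullback is moot here, since both $\Pi$ and $\iota$ are identity on objects, so the strict pullback in $\mathbf{Cat}$ already has the componentwise description you use.
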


\begin{proposition}
For any objects $U$ and $V$ of $\widetilde{\mathbf{G}^{\Delta}}$, the hom-set $\text{Hom}(U,V)$ is in bijection with a coset of $\mathbf{Z}$ in $\mathbf{G}$.
\end{proposition}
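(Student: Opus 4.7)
The plan is to unpack the pullback directly. A morphism from $U$ to $V$ in $\widetilde{\mathbf{G}^{\Delta}}$ is, by the universal property of the pullback, a pair $(\alpha,\beta)$ where $\alpha \in \text{Hom}_{\mathbf{G}^{\Delta}}(U,V)$ and $\beta \in \text{Hom}_{\widetilde{\mathbf{H}^{\Delta}}}(U,V)$ satisfy $\Pi(\alpha) = \iota(\beta)$. So the first step is simply to describe $\text{Hom}_{\widetilde{\mathbf{G}^{\Delta}}}(U,V)$ as this fibered set.

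Next I would use the defining property of $\widetilde{\mathbf{H}^{\Delta}}$: by hypothesis each of its hom-sets is a singleton, so there is a unique $\beta$, which corresponds under Proposition \ref{proposition:GDeltaStruct} (applied to the group $\mathbf{H}$) to a single element $h_0 \in \mathbf{H}$. Thus the pair is determined by $\alpha$ alone, subject to the constraint $\Pi(\alpha) = {}^{UV}h_0$.

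Now I would invoke Proposition \ref{proposition:GDeltaStruct} for $\mathbf{G}$: write $\alpha = {}^{UV}g$ for a unique $g \in \mathbf{G}$. The functor $\Pi$ acts by $\Pi({}^{UV}g) = {}^{UV}\pi(g)$, so the constraint becomes $\pi(g) = h_0$. Therefore the admissible $g$ are exactly those in the fiber $\pi^{-1}(h_0)$, which is by definition a coset of $\mathbf{Z} = \ker \pi$ in $\mathbf{G}$. Assembling these bijections yields $\text{Hom}_{\widetilde{\mathbf{G}^{\Delta}}}(U,V) \cong \pi^{-1}(h_0)$, as desired.

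There is no real obstacle here; everything follows from the pullback description together with Proposition \ref{proposition:GDeltaStruct}. The only point that deserves care is noting that the coset in question depends on the subcategory $\widetilde{\mathbf{H}^{\Delta}}$ chosen (since $h_0$ depends on the choice of singleton hom-sets), but once that choice is fixed the identification of $\text{Hom}(U,V)$ with a single coset of $\mathbf{Z}$ is canonical.
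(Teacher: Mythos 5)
Your argument is correct and is exactly the unpacking the paper intends: the paper offers no written proof (it declares both propositions ``immediate from the definition''), and your route---reading a morphism of the pullback as a pair $(\alpha,\beta)$ with $\Pi(\alpha)=\iota(\beta)$, using the singleton hom-sets of $\widetilde{\mathbf{H}^{\Delta}}$ to pin down $h_0$, and then identifying the admissible $\alpha$ with the fiber $\pi^{-1}(h_0)$, a coset of $\mathbf{Z}=\ker\pi$---is the canonical way to make that immediacy explicit. Your closing remark that the coset depends on the choice of $\widetilde{\mathbf{H}^{\Delta}}$ matches the paper's own observation that this choice determines the subcategory.
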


In the specific case where $\mathbf{G}$ is the $T\text{/}I$ group, there exists a projection functor $\Pi \colon {T\text{/}I}^{\Delta} \to {\mathbb{Z}_2}^{\Delta}$ induced by the homomorphism $\pi \colon T\text{/}I \to \mathbb{Z}_2$, and one can select an appropriate subcategory $\widetilde{{\mathbb{Z}_2}^{\Delta}}$. The subcategory $\widetilde{{T\text{/}I}^{\Delta}}$ obtained by the construction described above is then such that 
\begin{itemize}
\item{for any object $U$ of $\widetilde{{T\text{/}I}^{\Delta}}$, the endomorphism group $\text{End}(U)$ is isomorphic to $\mathbb{Z}_{12}$ and its elements correspond to generalized transpositions as exposed in section 1.3, and}
\item{for any objects $U$ and $V$ of $\widetilde{{T\text{/}I}^{\Delta}}$, the elements of hom-set $\text{Hom}(U,V)$ correspond either to generalized inversions or to generalized transpositions (but not both). Their nature depends on the choice of the subcategory $\widetilde{{\mathbb{Z}_2}^{\Delta}}$.}
\end{itemize}

\section{Groupoid bisections and wreath products}

Wreath products have found many applications in transformational music theory \cite{Peck_2009,Peck_2010}, most notably following the initial work of Hook on Uniform Triadic Transformations (UTT) \cite{Hook_2002}. In this section, we show how groupoids are related to wreath products through \textit{groupoid bisections}, thus generalizing the work of Hook. A more general treatment of groupoid bisections, and their relation to groupoid automorphisms can be found in the annex of this paper.

\subsection{Bisections of a groupoid}

Let $\mathbf{C}$ be a small groupoid. By convention, we will index the objects of $\mathbf{C}$ by $i \in \{1,\ldots,n\}$, where $n$ is the number of objects in $\mathbf{C}$. We denote by $Z$ the group of endomorphisms of any object $i$ of $\mathbf{C}$. We first give the definition of a \textit{bisection of a groupoid}. This notion, which has been studied in the theory of Lie groupoids, is a particular case of the notion of a local section of a topological category as introduced by Ehresmann \cite{Ehresmann_1959}, who later studied the category of such local sections \cite{Ehresmann_1966}. The word \textit{bisection} is due to Mackenzie \cite{mackenzie_1987,mackenzie_2005}.

\begin{definition}
A bisection of $\mathbf{C}$ is the data of a permutation $\sigma \in S_n$ and a collection of morphisms $g_{i\sigma(i)}$ of $\mathbf{C}$ for $i \in \{1,\ldots,n\}$. A bisection will be notated as $$\langle(\ldots,g_{i\sigma(i)},\ldots),\sigma\rangle.$$
\end{definition}

Bisections can be composed according to:

\begin{align*}
\langle(\ldots,f_{i\tau(i)},\ldots),\tau\rangle \circ \langle(\ldots,g_{i\sigma(i)},\ldots),\sigma\rangle \\ = \langle(\ldots,f_{\sigma(i)\tau\sigma(i)}g_{i\sigma(i)},\ldots),\tau\sigma\rangle,
\end{align*}
and form a group $\text{Bis}(\mathbf{C})$. The main result of this section is the following theorem, which establishes the structure of $\text{Bis}(\mathbf{C})$.

\begin{theorem}
The group $\text{Bis}(\mathbf{C})$ is isomorphic to the wreath product $Z \wr S_n$.
\end{theorem}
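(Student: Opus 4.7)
The plan is to construct an explicit isomorphism $\Phi \colon \mathrm{Bis}(\mathbf{C}) \to Z \wr S_n$ by first trivializing $\mathbf{C}$ along a chosen spanning tree of morphisms, then reading off a permutation part and $n$ elements of $Z$ from each bisection. Concretely, since $\mathbf{C}$ is a connected groupoid (implicit in the hypothesis that any two vertex groups are identified with the same $Z$), I fix a base object, say object $1$, and a family of chosen morphisms $\tau_i \colon 1 \to i$ for $i \in \{1, \ldots, n\}$, with $\tau_1 = \mathrm{id}_1$. Each $\tau_i$ provides a group isomorphism $\mathrm{End}(i) \cong Z$ by conjugation.

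Given a bisection $\langle(g_{i\sigma(i)})_i, \sigma\rangle$, I define
\[
\Phi\bigl(\langle(g_{i\sigma(i)})_i, \sigma\rangle\bigr) = \bigl((z_i)_i, \sigma\bigr), \qquad z_i = \tau_{\sigma(i)}^{-1} \, g_{i\sigma(i)} \, \tau_i \in \mathrm{End}(1) = Z.
\]
Each $z_i$ is a well-defined element of $Z$ because the composite $1 \xrightarrow{\tau_i} i \xrightarrow{g_{i\sigma(i)}} \sigma(i) \xrightarrow{\tau_{\sigma(i)}^{-1}} 1$ is an endomorphism of the base object. Conversely, given $((z_i)_i, \sigma) \in Z \wr S_n$, the formula $g_{i\sigma(i)} = \tau_{\sigma(i)} z_i \tau_i^{-1}$ recovers a bisection, so $\Phi$ is a bijection of underlying sets.

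The main verification is that $\Phi$ is a group homomorphism, and this is where one must be careful about the wreath-product convention. Using the composition rule from the definition, the $i$-th component of $\langle(f_{i\tau(i)}), \tau\rangle \circ \langle(g_{i\sigma(i)}), \sigma\rangle$ is $f_{\sigma(i)\tau\sigma(i)} g_{i\sigma(i)}$, so
\[
\Phi(\cdots)_i = \tau_{\tau\sigma(i)}^{-1} f_{\sigma(i)\tau\sigma(i)} g_{i\sigma(i)} \tau_i = \bigl(\tau_{\tau\sigma(i)}^{-1} f_{\sigma(i)\tau\sigma(i)} \tau_{\sigma(i)}\bigr) \bigl(\tau_{\sigma(i)}^{-1} g_{i\sigma(i)} \tau_i\bigr) = a_{\sigma(i)} \, b_i,
\]
where $(a_j, \tau) = \Phi(\langle(f_{j\tau(j)}), \tau\rangle)$ and $(b_i, \sigma) = \Phi(\langle(g_{i\sigma(i)}), \sigma\rangle)$. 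The permutation component is $\tau\sigma$, and together this is precisely the wreath-product multiplication $((a_j)_j, \tau)\cdot((b_i)_i, \sigma) = ((a_{\sigma(i)} b_i)_i, \tau\sigma)$.

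The routine parts are straightforward; the only real subtlety is matching conventions, namely verifying that the composition law of bisections (which twists indices by $\sigma$ in exactly the way a semidirect product does) lines up with the $S_n$-action on $Z^n$ used in the definition of $Z \wr S_n$. Once the trivialization by the $\tau_i$'s is fixed, the identification is essentially forced, and I expect no further obstacles beyond the bookkeeping above.
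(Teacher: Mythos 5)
Your proof is correct and follows essentially the same route as the paper: the chosen morphisms $\tau_i \colon 1 \to i$ play exactly the role of the paper's $h_{ki}$ (with $k=1$), and your formula $z_i = \tau_{\sigma(i)}^{-1} g_{i\sigma(i)} \tau_i$ is precisely the paper's composite map $\phi_{i1}(h_{i\sigma(i)}^{-1}g_{i\sigma(i)})$. The only difference is presentational: you build the isomorphism to $Z \wr S_n$ in one step, whereas the paper first exhibits the internal semidirect product decomposition $N \rtimes H \cong Z^n \rtimes S_n$ and then transports it, but the underlying idea and the convention-checking are the same.
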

\begin{proof}
We will first show that $\text{Bis}(\mathbf{C})$ is isomorphic to the semidirect product $Z^n \rtimes S_n$, and then construct an isomorphism from $Z^n \rtimes S_n$ to $Z \wr S_n$.

Let $k$ be an object of $\mathbf{G}$, and let $\{h_{ki}, i \in \{1,\ldots,n\}\}$ be the set obtained by choosing a morphism $h_{ki}$ of $\mathbf{G}$ for every object $i$ of $\mathbf{G}$.
This defines a collection of morphisms $\{h_{ij}=h_{kj}h_{ki}^{-1}, i \in \{1,\ldots,n\}, j \in \{1,\ldots,n\}\}$ such that for any objects $p$, $q$, and $r$ of $\mathbf{G}$, we have $h_{qr}h_{pq}=h_{pr}$.

Let $N$ be the subgroup of $\text{Bis}(\mathbf{C})$ composed of the bisections of $\mathbf{G}$ of the form $\langle(\ldots,n_{ii},\ldots),id\rangle$. The subgroup $N$ is obviously isomorphic to $Z^n$.

Let $H$ be the subgroup of $\text{Bis}(\mathbf{C})$ composed of the bisections of $\mathbf{G}$ of the form $\langle(\ldots,h_{i\sigma(i)},\ldots),\sigma\rangle$. By definition of the morphisms $h_{ij}$, $H$ is obviously isomorphic to $S_n$.

The intersection of the two subgroups $N$ and $H$ is the trivial subgroup composed of the bisection $\langle(\ldots,id_{ii},\ldots),id\rangle.$

Any bisection $\langle(\ldots,g_{i\sigma(i)},\ldots),\sigma\rangle$ can uniquely be decomposed as a product of an element of $N$ and an element of $H$, since any morphism $g_{ij}$ of $\mathbf{G}$ can be written as $g_{ij} = h_{ij}n_{ii}$.

Finally, we need to show that $N$ is normal in $\text{Bis}(\mathbf{C})$. Let $n = \langle(\ldots,n_{ii},\ldots),id\rangle$ be an element of $N$ and $g = \langle(\ldots,g_{i\sigma(i)},\ldots),\sigma\rangle$ be an element of $\text{Bis}(\mathbf{C})$. We have $gn = \langle(\ldots,g_{i\sigma(i)}n_{ii},\ldots),\sigma\rangle.$ Since for any morphisms $g_{ij}$ and $n_{ii}$ of $\mathbf{G}$, the morphism $n'_{jj}= g_{ij}n_{ii}g_{ij}^{-1}$ is uniquely defined, it follows that $gn$ is equal to $n'g$ for a suitable $n'$ of $N$, showing that $N$ is normal in $\text{Bis}(\mathbf{C})$.

For all the reasons above, the group $\text{Bis}(\mathbf{C})$ is isomorphic to the semidirect product $Z^n \rtimes S_n$.

Observe that the morphisms $h_{ij}$ induce automorphisms $\phi_{ij}$ of $Z$ given by $n'_{jj}= h_{ij}n_{ii}h_{ij}^{-1}$, with the added property that for any objects $p$, $q$, and $r$, we have $\phi_{qr} \circ \phi_{pq} = \phi_{pr}$. The isomorphism between $\text{Bis}(\mathbf{C})$ and $Z^n \rtimes S_n$ can be formulated as follows. An element
$$\langle(\ldots,g_{i\sigma(i)},\ldots),\sigma\rangle = \langle(\ldots,h_{i\sigma(i)}n_{ii},\ldots),\sigma\rangle$$
of $\text{Bis}(\mathbf{C})$ can be bijectively identified with the element
$$\langle(\ldots,n_i,\ldots),\sigma\rangle$$
of $Z^n \rtimes S_n$. The composition is given by:
$$\langle(\ldots,m_i,\ldots),\tau\rangle\langle(\ldots,n_i,\ldots),\sigma\rangle = \langle(\ldots,\phi_{\sigma(i)i}(m_{\sigma(i)})n_i,\ldots),\tau\sigma\rangle.$$
It can be checked that the property $\phi_{jk} \circ \phi_{ij} = \phi_{ik}$ ensures that the associativity condition is respected.

We now show that $\text{Bis}(\mathbf{C})$ is isomorphic to the wreath product $Z \wr S_n$.
Consider the map $\chi \colon \text{Bis}(\mathbf{C}) \to G \wr S_n$, which sends an element $\langle(\ldots,m_i,\ldots),\tau\rangle$ of $\text{Bis}(\mathbf{C})$ to the element $\langle(\ldots,\phi_{i1}(m_i),\ldots),\tau\rangle$ of $Z \wr S_n$. We claim that the map $\chi$ is an isomorphism. It is obvious that $\chi$ sends the identity of $\text{Bis}(\mathbf{C})$ to the identity of $Z \wr S_n$.
Let $\langle(\ldots,m_i,\ldots),\tau\rangle$ and $\langle(\ldots,n_i,\ldots),\sigma\rangle$ be two elements of $\text{Bis}(\mathbf{C})$. We have
$$\chi(\langle(\ldots,m_i,\ldots),\tau\rangle) = \langle(\ldots,\phi_{i1}(m_i),\ldots),\tau\rangle,$$
and
$$\chi(\langle(\ldots,n_i,\ldots),\sigma\rangle) = \langle(\ldots,\phi_{i1}(n_i),\ldots),\sigma\rangle.$$
The product of these two elements in $Z \wr S_n$ is equal to
$$\langle(\ldots,\phi_{\sigma(i)1}(m_{\sigma(i)})\phi_{i1}(n_i),\ldots),\tau\sigma\rangle.$$

On the other hand, we have
$$\chi(\langle(\ldots,m_i,\ldots),\tau\rangle\langle(\ldots,n_i,\ldots),\sigma\rangle) \\ = \chi(\langle(\ldots,\phi_{\sigma(i)i}(m_{\sigma(i)})n_i,\ldots),\tau\sigma\rangle),$$
which is equal to
$$\langle(\ldots,\phi_{i1}(\phi_{\sigma(i)i}(m_{\sigma(i)})n_i),\ldots),\tau\sigma\rangle,$$
which, given the properties of the isomorphisms $\phi_{ij}$, is equal to
$$\langle(\ldots,\phi_{\sigma(i)1}(m_{\sigma(i)})\phi_{i1}(n_i),\ldots),\tau\sigma\rangle,$$
thus showing that we have an isomorphism, and that $\text{Bis}(\mathbf{C})$ is isomorphic to the wreath product $Z \wr S_n$.
\end{proof}

\subsection{Application to musical transformations}

The following proposition shows how can one pass from a groupoid action on sets to a corresponding group action.

\begin{proposition}
Let $\mathbf{C}$ be a small groupoid, with $Z$ the group of endomorphisms of its objects, and let $S$ be a functor from $\mathbf{C}$ to $\mathbf{Sets}$.
There is a canonical group action of $Z \wr S_n$ on the disjoint union of the image sets $S(i)$.
\end{proposition}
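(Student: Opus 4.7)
The plan is to exploit the theorem just proved, which gives $Z \wr S_n \cong \mathrm{Bis}(\mathbf{C})$, and therefore reduce the problem to constructing a canonical action of $\mathrm{Bis}(\mathbf{C})$ on $X := \bigsqcup_{i=1}^n S(i)$. This is natural because a bisection already packages exactly one morphism out of each object, and the functor $S$ turns morphisms into functions between the $S(i)$'s.

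First I would define, for each bisection $b = \langle(\ldots,g_{i\sigma(i)},\ldots),\sigma\rangle$, a map $b\cdot - \colon X \to X$ by the rule: if $x \in S(i)$, then $b\cdot x := S(g_{i\sigma(i)})(x) \in S(\sigma(i)) \subseteq X$. This is well defined on the disjoint union because each element of $X$ belongs to a unique $S(i)$, and the target $S(\sigma(i))$ is again a summand of $X$.

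Next I would verify the two axioms of a group action. The identity bisection $\langle(\ldots,\mathrm{id}_{ii},\ldots),\mathrm{id}\rangle$ acts on $x \in S(i)$ as $S(\mathrm{id}_i)(x) = \mathrm{id}_{S(i)}(x) = x$ by functoriality of $S$. For compatibility with composition, let $b = \langle(\ldots,f_{i\tau(i)},\ldots),\tau\rangle$ and $b' = \langle(\ldots,g_{i\sigma(i)},\ldots),\sigma\rangle$. By the composition formula for bisections recalled in the paper,
\[
b \circ b' = \langle(\ldots,f_{\sigma(i)\tau\sigma(i)}\, g_{i\sigma(i)},\ldots),\tau\sigma\rangle.
\]
Applying this to $x \in S(i)$ and using functoriality of $S$ gives
\[
(b\circ b')\cdot x = S\bigl(f_{\sigma(i)\tau\sigma(i)}\,g_{i\sigma(i)}\bigr)(x) = S(f_{\sigma(i)\tau\sigma(i)})\bigl(S(g_{i\sigma(i)})(x)\bigr) = b\cdot(b'\cdot x),
\]
as required. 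The indexing matches up precisely because $b'\cdot x$ lies in $S(\sigma(i))$, and on that summand $b$ acts by the morphism $f_{\sigma(i)\tau\sigma(i)}$.

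Finally I would transport this along the isomorphism $\chi \colon \mathrm{Bis}(\mathbf{C}) \xrightarrow{\sim} Z \wr S_n$ constructed in the proof of the theorem, producing the claimed action of $Z \wr S_n$ on $X$. I do not expect any serious obstacle: the verification is essentially a single diagram chase that amounts to functoriality plus the bisection composition law. The only mild subtlety worth flagging is that the action transported to $Z \wr S_n$ depends, through $\chi$, on the choice of the auxiliary morphisms $h_{ki}$ used in the theorem's proof; different choices yield conjugate actions, which is why the proposition is stated as the existence of a \emph{canonical} action of $Z \wr S_n$ only up to this inner ambiguity, while the action of $\mathrm{Bis}(\mathbf{C})$ itself is entirely canonical.
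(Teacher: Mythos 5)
Your proposal is correct and follows essentially the same route as the paper: both define the action of a bisection $\langle(\ldots,g_{i\sigma(i)},\ldots),\sigma\rangle$ on the disjoint union by sending $(x,i)$ to $(S(g_{i\sigma(i)})(x),\sigma(i))$ and then invoke the isomorphism $\mathrm{Bis}(\mathbf{C})\cong Z\wr S_n$ from the preceding theorem. Your explicit verification of the action axioms via functoriality, and your remark on the dependence of the transported action on the choice of the $h_{ki}$, are welcome additions that the paper leaves implicit.
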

\begin{proof}
Let $\bigsqcup S(i) = \bigcup \{(x,i), x \in S(i), i \in \{1,\ldots,n\}\}$ be the disjoint union of the image sets $S(i)$ and let
$$\langle(\ldots,g_{i\sigma(i)},\ldots),\sigma\rangle$$
be a bisection of $\mathbf{C}$. The group action of $Z \wr S_n$ on $\bigsqcup S(i)$ is directly given by the action defined as 
$$\langle(\ldots,g_{i\sigma(i)},\ldots),\sigma\rangle \cdot (x,i) = (S(g_{i\sigma(i)})(x),\sigma(i)).$$
\end{proof}

As a direct application to musical transformations, consider a subgroupoid $\widetilde{{T\text{/}I}^{\Delta}}$ as constructed in section 1.5. Given a functor $R \colon \Delta \to \mathbf{Sets}$, we know from Proposition 2 that there exists a canonical functor $P_{R,S} \colon {T\text{/}I}^{\Delta} \to \mathbf{Sets}$, which extends to a functor $\widetilde{P_{R,S}} \colon \widetilde{{T\text{/}I}^{\Delta}} \to \mathbf{Sets}$. From Proposition 5, we thus deduce that there exists a group action of $\mathbb{Z}_{12} \wr S_n$ on the disjoint union of the image sets $\widetilde{P_{R,S}}(i)$, or in other words the set of all PK-nets $(R,S,i,\phi)$. In the case $R$ is representable and $n=2$, it is easy to see that one recovers Hook's UTT group acting on two different types of chords.

\bibliography{pknets_groupoids}
\bibliographystyle{alpha}
\addcontentsline{toc}{section}{References}

\section*{Annex}

This annex gives a more general presentation of groupoid bisections and their relation to groupoid automorphisms. Let $\mathbf{C}$ be a groupoid. We denote by $\mathbf{C_0}$ the collection of its objects, and by $\mathbf{C_1}$ the collection of its morphisms, the functions $s \colon \mathbf{C_1} \to \mathbf{C_0}$ and $t \colon \mathbf{C_1} \to \mathbf{C_0}$ being the usual source and target maps. We denote by $Z$ the group of endomorphisms of any object $e$ of $\mathbf{C}$. A more general definition of a bisection can be formulated as follows. 

\begin{definition}
A bisection of $\mathbf{C}$ is a map $b \colon \mathbf{C_0} \to \mathbf{C_1}$, such that
\begin{itemize}
\item{the map $s \circ b$ is the identity map, and}
\item{the map $t \circ b \colon \mathbf{C_0} \to \mathbf{C_0}$ is a bijection on $\mathbf{C_0}$.}
\end{itemize}
\end{definition}

\begin{proposition}
The bisections of $\mathbf{C}$ form a group $\text{Bis}(\mathbf{C})$ for the composition $b' \star b = b'(t \circ b(e))$ for each object $e$ of $\mathbf{C}$.
\end{proposition}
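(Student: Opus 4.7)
The plan is to verify the four group axioms for $(\text{Bis}(\mathbf{C}), \star)$, reading the composition formula as $(b' \star b)(e) = b'(t \circ b(e)) \circ b(e)$ (the displayed formula in the statement suppresses the outer composition with $b(e)$, but this is forced on us if the result is to be a map $\mathbf{C_0} \to \mathbf{C_1}$ with source $e$).

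First I would establish closure. The source condition is immediate: $s\bigl(b'(t(b(e))) \circ b(e)\bigr) = s(b(e)) = e$. For the target condition, observe that $t \circ (b' \star b) = (t \circ b') \circ (t \circ b)$, which is a composition of bijections on $\mathbf{C_0}$, hence a bijection. This already gives the useful book-keeping identity that the ``target map'' of a composite bisection is the composite of the target maps, which I will reuse in the next step.

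Next, for associativity, I would evaluate both $((b'' \star b') \star b)(e)$ and $(b'' \star (b' \star b))(e)$ and check that each simplifies, using associativity of composition in $\mathbf{C}$ and the target formula just established, to the single expression
\[
b''\bigl(t(b'(t(b(e))))\bigr) \circ b'(t(b(e))) \circ b(e).
\]
The identity element is the bisection $\mathbf{1} \colon e \mapsto \mathrm{id}_e$, which trivially satisfies $s \circ \mathbf{1} = t \circ \mathbf{1} = \mathrm{id}_{\mathbf{C_0}}$, and one checks $b \star \mathbf{1} = \mathbf{1} \star b = b$ directly from the formula. For inverses, given a bisection $b$ with bijective target map $\tau = t \circ b$, I would define $b^{-1}(e) = b(\tau^{-1}(e))^{-1}$, using that every morphism in the groupoid is invertible. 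Then $s(b^{-1}(e)) = t(b(\tau^{-1}(e))) = e$ and $t \circ b^{-1} = \tau^{-1}$ is bijective, so $b^{-1}$ is a bisection; the computation $(b \star b^{-1})(e) = b(\tau^{-1}(e)) \circ b(\tau^{-1}(e))^{-1} = \mathrm{id}_e$ and its mirror then give the two-sided inverse.

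The main obstacle is purely notational rather than mathematical: one must keep the direction of composition straight throughout, and be careful that $\tau = t \circ b$ is indeed a bijection of $\mathbf{C_0}$ (which is exactly the second clause in the definition of a bisection), since this is what makes the definition of $b^{-1}$ well-posed. Once the formula is parsed correctly, every axiom reduces to a one-line diagram chase using invertibility of morphisms in the groupoid.
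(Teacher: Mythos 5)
Your proof is correct. The paper actually states this proposition without proof, recording only the inverse formula $b^{-1}\colon e \mapsto b\bigl((t\circ b)^{-1}(e)\bigr)^{-1}$ immediately afterwards, which is exactly the inverse you construct; so your verification simply supplies the routine check the authors omitted. Your reading of the composition as $(b'\star b)(e) = b'\bigl(t(b(e))\bigr)\circ b(e)$ is the right repair of the abbreviated formula in the statement: it is the only reading for which $s\circ(b'\star b)$ is the identity, and it agrees with the indexed composition law $\langle(\ldots,f_{i\tau(i)},\ldots),\tau\rangle\circ\langle(\ldots,g_{i\sigma(i)},\ldots),\sigma\rangle=\langle(\ldots,f_{\sigma(i)\tau\sigma(i)}g_{i\sigma(i)},\ldots),\tau\sigma\rangle$ given in Section 2.1. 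The identity $t\circ(b'\star b)=(t\circ b')\circ(t\circ b)$ and the associativity and inverse computations are all as they should be.
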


The inverse of $b$ is $b^{-1} \colon e \to b({(t \circ b)}^{-1}(e))^{-1}$ for each object $e$ of $\mathbf{C}$. The bisections $n$ such that $t \circ n$ is an identity form a subgroup $N$ of $\text{Bis}(\mathbf{C})$ which is isomorphic to the product $\prod_{e}Z$ of the groups $Z$ for each object $e$ of $\mathbf{C}$.

Groupoid bisections are closely related to groupoid automorphisms, as shows the next Proposition.

\begin{proposition}
There exists an homomorphism $\xi$ from $\text{Bis}(\mathbf{C})$ to the group $\text{Aut}(\mathbf{C})$ of automorphisms of $\mathbf{C}$, which associates to a bisection $b$ of $\mathbf{C}$ the automorphism of $\mathbf{C}$ defined by $\xi(b)(g) = b(e')gb(e)^{-1}$ for any morphism $g \colon e \to e'$ in $\mathbf{C}$.
\end{proposition}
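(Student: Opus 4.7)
The plan is to establish three things in turn: (i) for each bisection $b$, the assignment $\xi(b)$ is a well-defined functor from $\mathbf{C}$ to itself; (ii) $\xi(b)$ is invertible, hence lies in $\text{Aut}(\mathbf{C})$; and (iii) the assignment $b \mapsto \xi(b)$ is multiplicative with respect to the group law on $\text{Bis}(\mathbf{C})$.

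For (i), I would begin by reading off the source and target of the formula. Since $b(e) \colon e \to (t \circ b)(e)$ and $b(e') \colon e' \to (t \circ b)(e')$, the expression $b(e')\,g\,b(e)^{-1}$ is a well-typed morphism from $(t \circ b)(e)$ to $(t \circ b)(e')$. This forces $\xi(b)$ to act on objects by $e \mapsto (t \circ b)(e)$, which is a bijection by the defining property of a bisection. Functoriality is then a telescoping: for composable $g \colon e \to e'$ and $g' \colon e' \to e''$, the factor $b(e')^{-1} b(e')$ cancels in the middle of
\[
\xi(b)(g'g) = b(e'')\,g'\,g\,b(e)^{-1} = b(e'')\,g'\,b(e')^{-1}\,b(e')\,g\,b(e)^{-1},
\]
and $\xi(b)(\mathrm{id}_e) = b(e)\,b(e)^{-1} = \mathrm{id}_{(t \circ b)(e)}$.

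For (iii), which also yields (ii) as a corollary, I would compute both sides of $\xi(b' \star b) = \xi(b') \circ \xi(b)$ on an arbitrary $g \colon e \to e'$. Using $(b' \star b)(e) = b'((t \circ b)(e)) \cdot b(e)$ and the analogous formula at $e'$, the left side becomes the five-fold product
\[
b'((t \circ b)(e'))\,b(e')\,g\,b(e)^{-1}\,b'((t \circ b)(e))^{-1}.
\]
Since $\xi(b)(g) = b(e')\,g\,b(e)^{-1}$ has source $(t \circ b)(e)$ and target $(t \circ b)(e')$, applying $\xi(b')$ to it produces exactly the same five-fold product, establishing multiplicativity. The identity bisection $e \mapsto \mathrm{id}_e$ manifestly maps to the identity functor, and invertibility of $\xi(b)$ then follows from $\xi(b) \circ \xi(b^{-1}) = \xi(b \star b^{-1}) = \mathrm{id}$ using the explicit inverse bisection given in the preceding proposition.

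The main obstacle is not conceptual but bookkeeping: one must verify at each step that the sources and targets of the morphisms being composed actually agree, and that the composition law $b' \star b$ from $\text{Bis}(\mathbf{C})$ is substituted correctly. Beyond this, the argument is purely formal conjugation, and no properties of $\mathbf{C}$ beyond being a groupoid (so that $b(e)^{-1}$ exists) are used.
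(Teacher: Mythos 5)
Your proof is correct, and in fact the paper states this proposition in the annex without supplying any proof, so your verification fills a gap rather than duplicating an argument; the computations of well-typedness, functoriality, multiplicativity $\xi(b'\star b)=\xi(b')\circ\xi(b)$, and invertibility via $\xi(b)\circ\xi(b^{-1})=\mathrm{id}$ are exactly what is needed. Note that you have (rightly) read the paper's composition law as $(b'\star b)(e)=b'\bigl((t\circ b)(e)\bigr)\,b(e)$, i.e.\ with the extra factor $b(e)$ that the paper's displayed formula omits; without that reading $b'\star b$ would not even satisfy $s\circ(b'\star b)=\mathrm{id}$, so your interpretation is the correct one and your five-fold product computation goes through.
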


The image of $\text{Bis}(\mathbf{C})$ by $\xi$ in $\text{Aut}(\mathbf{C})$ is called the subgroup of \textit{internal automorphisms} of $\mathbf{C}$, notated $\text{Aut}_\text{int}(\mathbf{C})$. If $\mathbf{C}$ is a group, we recover the usual notion of internal automorphism of a group.

We now suppose given an object $u$ of $\mathbf{C}$, and a map $h \colon \mathbf{C_0} \to \mathbf{C_1}$ which associates to each object $e$ of $\mathbf{C}$ a morphism $h(e) \colon e \to u$ such that $h(u)=u$. Then, for each pair $(e,e')$ of objects of $\mathbf{C}$, we denote by $h_{e,e'}$ the morphism $h_{e,e'} = h(e')^{-1}h(e)$. These morphisms from a subgroupoid of $\mathbf{C}$ isomorphic to the groupoid $\mathbf{C_0}^2$ of pairs of objects of $\mathbf{C}$. The following proposition can be found in \cite{Ehresmann_1959}.

\begin{proposition}
Given the above data of an object $u$ of $\mathbf{C}$ and a map $h$, the following statements hold true.
\begin{itemize}
\item{There exists an isomorphism from $\mathbf{C}$ to the groupoid $\mathbf{P}$, product of the groupoid of pairs $\mathbf{C_0}^2$ with the group $Z=\text{End}(u)$. The isomorphism maps $g \colon e \to e'$ on $(e,e',h(e')gh(e)^{-1})$.}
\item{To a bijection $\sigma$ of $\mathbf{C_0}$, the map $h$ associates the bisection $h_\sigma$ mapping $e$ to $h_{e,\sigma(e)}$. The bisections of the form $h_\sigma$ form a subgroup $H$ of $\text{Bis}(\mathbf{C})$ isomorphic to the group $\text{Bij}(\mathbf{C_0})$ of bijections of $\mathbf{C_0}$.}
\end{itemize}
\end{proposition}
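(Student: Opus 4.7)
The plan is to treat the two bullets separately. Both reduce to the telescoping identity $h(e)h(e)^{-1}=\mathrm{id}_u$, which follows from $h(e)\colon e\to u$ being an isomorphism in the groupoid $\mathbf{C}$.

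For the first item, I would define the candidate functor $\Phi\colon\mathbf{C}\to\mathbf{P}$ to be the identity on objects and to act on a morphism $g\colon e\to e'$ by the given assignment $\Phi(g)=(e,e',h(e')gh(e)^{-1})$. First I would check that the third entry lies in $Z=\text{End}(u)$: the composite $h(e')\circ g\circ h(e)^{-1}$ traces $u\to e\to e'\to u$, so is an endomorphism of $u$. Then I would verify functoriality: preservation of identities is immediate from $h(e)\,\mathrm{id}_e\,h(e)^{-1}=\mathrm{id}_u$, and for composable $g\colon e\to e'$, $g'\colon e'\to e''$ the composite $\Phi(g')\circ\Phi(g)$ in $\mathbf{P}$ contains the middle factor $h(e')^{-1}h(e')=\mathrm{id}_{e'}$, which collapses to $(e,e'',h(e'')g'gh(e)^{-1})=\Phi(g'g)$. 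Finally, I would exhibit the candidate inverse $\Psi\colon(e,e',z)\mapsto h(e')^{-1}zh(e)$ and check that the two round-trips $\Phi\Psi$ and $\Psi\Phi$ are identities, again by the same cancellation. This establishes the desired groupoid isomorphism.

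For the second item, I would first verify that $h_\sigma\colon e\mapsto h_{e,\sigma(e)}$ really is a bisection: by the definition $h_{e,e'}=h(e')^{-1}h(e)$ we have $h_\sigma(e)\colon e\to\sigma(e)$, so $s\circ h_\sigma=\mathrm{id}_{\mathbf{C_0}}$ and $t\circ h_\sigma=\sigma$ is a bijection. Next I would show that $\sigma\mapsto h_\sigma$ is a homomorphism into $\text{Bis}(\mathbf{C})$. Using the composition $(b'\star b)(e)=b'(t(b(e)))\circ b(e)$ from the preceding proposition and expanding,
\[
(h_\tau\star h_\sigma)(e)=h_{\sigma(e),\tau\sigma(e)}\circ h_{e,\sigma(e)}=h(\tau\sigma(e))^{-1}h(\sigma(e))\cdot h(\sigma(e))^{-1}h(e)=h_{e,\tau\sigma(e)}=h_{\tau\sigma}(e),
\]
so $h_\tau\star h_\sigma=h_{\tau\sigma}$. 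Injectivity of $\sigma\mapsto h_\sigma$ is immediate, since $t\circ h_\sigma=\sigma$ recovers $\sigma$; hence the image $H$ is a subgroup of $\text{Bis}(\mathbf{C})$ isomorphic to $\text{Bij}(\mathbf{C_0})$.

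I expect no serious obstacle: once the telescoping identity is isolated, each verification is a one-line cancellation. The one point worth watching is the composition convention in $\text{Bis}(\mathbf{C})$, namely the order in which $b$ and $b'$ are applied in $(b'\star b)(e)=b'(t(b(e)))\circ b(e)$; the formula in the preceding proposition is stated in abbreviated form, and a reversed convention would flip the computations above and turn $\sigma\mapsto h_\sigma$ into an antihomomorphism instead.
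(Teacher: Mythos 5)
Your proof is correct. Note that the paper itself offers no proof of this proposition --- it is stated with a citation to Ehresmann (1959) --- so there is no argument of the authors' to compare against; your direct verification (functoriality of $g\mapsto(e,e',h(e')gh(e)^{-1})$ with explicit inverse $(e,e',z)\mapsto h(e')^{-1}zh(e)$, and the telescoping computation $h_\tau\star h_\sigma=h_{\tau\sigma}$) is exactly the standard one and fills that gap completely. Your worry about the composition convention is legitimate given the abbreviated formula $b'\star b=b'(t\circ b(e))$ in the preceding proposition, but it is resolved in your favour by the paper itself: the stated inverse $b^{-1}(e)=b\bigl((t\circ b)^{-1}(e)\bigr)^{-1}$ and the Section~2.1 composition rule $\langle(\ldots,f_{i\tau(i)},\ldots),\tau\rangle\circ\langle(\ldots,g_{i\sigma(i)},\ldots),\sigma\rangle=\langle(\ldots,f_{\sigma(i)\tau\sigma(i)}g_{i\sigma(i)},\ldots),\tau\sigma\rangle$ both force the reading $(b'\star b)(e)=b'(t(b(e)))\circ b(e)$ that you adopted, so $\sigma\mapsto h_\sigma$ is indeed a homomorphism and not an antihomomorphism.
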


The structure of the group $\text{Bis}(\mathbf{C})$ is explicited in the next theorem.

\begin{theorem}
The group $\text{Bis}(\mathbf{C})$ is generated by its subgroups $N=\prod_{e}Z$ and $H \simeq \text{Bij}(\mathbf{C_0})$. Moreover, $H$ acts on $N$ and $\text{Bis}(\mathbf{C})$ is isomorphic to the corresponding semidirect product.
\end{theorem}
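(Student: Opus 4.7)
The plan is to adapt the argument of the theorem in Section 2.1 to the general groupoid setting of the annex, using the map $h$ and the associated bisections $h_\sigma$ supplied by the preceding proposition. The strategy is to establish in turn (i) a unique decomposition of every bisection as a product of an element of $H$ and an element of $N$, (ii) that $N \cap H$ is trivial, and (iii) that $N$ is normal in $\text{Bis}(\mathbf{C})$. The standard recognition criterion for an internal semidirect product then yields $\text{Bis}(\mathbf{C}) \simeq N \rtimes H$ with $H$ acting on $N$ by conjugation.

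For (i), given a bisection $b \in \text{Bis}(\mathbf{C})$, set $\sigma = t \circ b$, which is a bijection of $\mathbf{C_0}$. Both $b(e)$ and $h_{e,\sigma(e)} = h(\sigma(e))^{-1} h(e)$ are morphisms $e \to \sigma(e)$, so $n(e) = h_{e,\sigma(e)}^{-1} \, b(e)$ defines an endomorphism of $e$. Assembling these pointwise yields an element $n \in N$, and a direct computation from the composition law of bisections gives $b = h_\sigma \star n$. Uniqueness is forced because $t \circ (h_\sigma \star n) = \sigma$ determines $h_\sigma$, and then $n$ is recovered from $b$.

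For (ii), a bisection in $H \cap N$ has the form $h_\sigma$ with $\sigma = t \circ h_\sigma = \text{id}$, hence sends $e$ to $h_{e,e} = h(e)^{-1} h(e) = \text{id}_e$. For (iii), if $b$ has target permutation $\sigma$ and $n \in N$, then the target map of $b \star n \star b^{-1}$ equals $\sigma \circ \text{id} \circ \sigma^{-1} = \text{id}$, so the conjugate lies in $N$. Combining (i)--(iii) produces the conjugation action of $H$ on $N$ and the claimed semidirect product structure.

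The main potential obstacle is the careful bookkeeping of the composition convention $(b' \star b)(e) = b'(t \circ b(e)) \, b(e)$ when verifying $b = h_\sigma \star n$ and when computing $h_\tau \star h_\sigma = h_{\tau\sigma}$, which uses the telescoping identity $h_{e',e''} \, h_{e,e'} = h_{e,e''}$ to confirm that $H \simeq \text{Bij}(\mathbf{C_0})$. Beyond this, everything transfers structurally from the finite case of Section 2.1, with $H$ playing the role of $S_n$ and $N = \prod_e Z$ playing the role of $Z^n$; no size restriction on $\mathbf{C_0}$ is needed, since the decomposition is entirely pointwise.
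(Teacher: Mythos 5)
Your proposal is correct and follows essentially the same route as the paper: the same factorization $b = h_{\sigma} \star n$ with $\sigma = t \circ b$ and $n(e) = h_{\sigma}(e)^{-1} b(e)$, the same subgroups $N$ and $H$, and conjugation as the action of $H$ on $N$. The only difference is that you explicitly carry out the standard internal-semidirect-product verification (uniqueness of the decomposition, triviality of $N \cap H$, normality of $N$) that the paper's proof leaves implicit, which is a welcome addition rather than a departure.
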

\begin{proof}
Any bisection $b$ in $\text{Bis}(\mathbf{C})$ can be written as the composite $b = h_{tb} \circ n_b$, where $n_b$ is an element of $N$ defined by $n_b(e)=h_{tb}(e)^{-1}b(e)$ for each object $e$ of $\mathbf{C}$.

The action of $H$ on $N$ is given by the map $(h_\sigma,n) \mapsto h_\sigma \cdot n$, where we have
$$h_\sigma \cdot n(e)=h_\sigma(e)^{-1}n(\sigma(e))h_\sigma(e).$$

The group $\text{Bis}(\mathbf{C})$ is isomorphic to the corresponding semidirect product $N \rtimes H$, the isomorphism associating a bisection $b$ to the pair $(n_b, h_{tb})$.
\end{proof}

It follows immediately that the group $\text{Aut}_\text{int}(\mathbf{C})$ is isomorphic to the semidirect product $(\prod_{e}Z) \rtimes \text{Bij}(\mathbf{C_0})$.

\end{document}